%% file: main.tex
\documentclass[10pt,twoside]{amsart}
\usepackage{mathtools,amsmath,amssymb}
\usepackage{mathrsfs}
\usepackage{eucal}

\usepackage{libertine}       
\usepackage[a4paper,top=1in, bottom=1in, left=1in, right=1in]{geometry} 
\usepackage{graphicx} 
\usepackage[colorlinks=true,linkcolor=blue,citecolor=black]{hyperref} 

\usepackage{cleveref}
\usepackage[backend=biber,backref=true,doi=false,url=false,maxnames=9]{biblatex} 
\renewbibmacro*{pageref}{%
  \iflistundef{pageref}
    {}
    {\addspace
     \mkbibbrackets{\printlist[pageref][-\value{listtotal}]{pageref}}}} 
\usepackage{lipsum}
\usepackage{adjustbox} 
\usepackage{tikz-cd}
\usepackage{mathptmx}
\usepackage{quiver}
\usepackage{float}
\usepackage[english]{babel}
\usepackage{amsthm}
\usepackage{mathtools}
\usepackage{soul}
\usepackage{nicematrix}
\usepackage{array}   
\usepackage{booktabs}
\usepackage{float}
\usepackage{orcidlink}
\usepackage{csquotes}
\usepackage{booktabs}
\usepackage{geometry}
\usepackage{multirow}
\usepackage{nccmath}
\usepackage[normalem]{ulem}

\usepackage{nomencl}

\makenomenclature
\usepackage[toc,page]{appendix}
\usepackage{caption}
\newcommand{\GL}{\mathrm{GL}}
\newcommand{\SL}{\mathrm{SL}}
\newcommand{\tr}{\mathrm{tr}}

\newcommand{\M}{\mathrm{M}}
\newcommand{\im}{\mathrm{Im}}

\newcommand{\R}{\mathcal{O}_2}

\usepackage{thmtools, thm-restate}
\newtheorem{theorem}{Theorem}[section]
\newtheorem{lemma}[theorem]{Lemma}
\newtheorem{proposition}[theorem]{Proposition}
\newtheorem{corollary}[theorem]{Corollary}
\theoremstyle{definition} 
\newtheorem{definition}[theorem]{Definition}

\newtheorem{remark}[theorem]{Remark}
\newtheorem{question}{Question}

\title[Engel word on $\mathfrak{sl}_2^{\circ}(\mathcal{O})$ ]{ Surjectivity of Engel Maps over trace zero matrices in  $\M_2(\mathcal{O})$ }
\date{\today}
\author[Roy]{Ayon Roy}
\email[(Roy)]{ayonroy1999@gmail.com}
\address{Indian Institute of Science Education and Research Pune, Dr. Homi Bhabha Road, Pashan, Pune 411 008, India}
\author[Singh]{Anupam Singh}
\email[(Singh)]{anupamk18@gmail.com}
\address{Indian Institute of Science Education and Research Pune, Dr. Homi Bhabha Road, Pashan, Pune 411 008, India}

\date{\today}
\subjclass[2020]{20G40, 20D06, 20G25, 15B30}
\keywords{Engel map, Lie algebra, fibers, conjugacy classes, local rings}
\thanks{Roy is supported by an IISER Pune PhD Fellowship. Singh is funded by an ANRF-MATRICS Grant ANRF/ARGM/2025/000095/MTR}
\begin{document}
\begin{abstract}
The surjectivity of various noncommutative polynomials has been studied extensively on Lie algebras over fields of different characteristics. In this article, we study the surjectivity of Engel Maps over trace zero matrices in $\M_2(\mathcal{O})$, where $\mathcal{O}$ is a local principal ideal ring complete with respect to its maximal ideal and has a residue field $k$ of characteristic $\neq 2$. We show that the image of $(m+1)$-th Engel map induced by the Engel polynomial $e_{m+1}(x, y) = [\cdots[[x, \underbrace{y], y], \dots, y]}_{m+1 \text{ times}}$ over $\M_2(\mathcal{O})$ can be determined by the image of the corresponding Engel map over $\M_2(k)$, for $m\geq 1$. Moreover, we prove that the $(m+1)$-th Engel map on $\mathfrak{sl}_2^{\circ}(\mathcal{O})=\left\{\ A\in \M_2(\mathcal{O})\ \mid\ \tr(A)=0\right\}$ is surjective if and only if the corresponding Engel map on $\mathfrak{sl}_2(k)$ is surjective. Under some mild condition on the residue field $k$, our results shows that every $g\in \mathfrak{sl}_2^{\circ}(\mathcal{O})$ can be expressed as $g = e_{m+1}(h_1, h_2)$ for $m\geq 1$, where both $h_1, h_2$ are in $\mathfrak{sl}_2^{\circ}(\mathcal{O})$.
\end{abstract}

\maketitle
\input{intro}
\input{Lie-Engel-maps}

\input{fibers}
\input{surj}
\printbibliography
\vspace{2em}
\end{document}

%% file: intro.tex
\section{Introduction}

Let $\mathcal{R}$ be a commutative ring with unity and $A\in \M_n(\mathcal{R})$. If there exist matrices $P, Q\in \M_n(\mathcal{R})$ such that $A$
can be written as $A=[P, Q] = PQ - QP$ then the trace of $A$ must be zero. A natural question is whether every trace-zero matrix in $\M_n(\mathcal{R})$ can be written as a single commutator. This was first proved for matrices over fields of characteristic zero by Shoda~\cite{shodacommutator} and later by Albert and Muckenhoupt~\cite{albertcommutator} for any general field. Over time, this question has been investigated for $\M_n(\mathcal{R})$ for more general rings $\mathcal{R}$, including principal ideal domains and the ring of integers. The first result for domains other than fields was proved by Lissner \cite{lissnercommutator}, who showed that every $2 \times 2$ trace-zero matrix over a PID is a commutator. This work was motivated by its connection to a special case of Serre’s problem on projective modules, later known as the Quillen–Suslin theorem; see \cite[Sections 1-2] {lissnercommutator}. Lissner’s result was later proved in full generality by Stasinski \cite{stasinskicommutator, Stasinski2}, who showed that every trace-zero matrix over a principal ideal domain (PID) is a commutator, thereby answering Laffey’s \cite{laffey1997lectures} question of whether every trace-zero matrix over a Euclidean domain is a commutator. Moreover, Stasinski \cite[Corollary 6.4]{stasinskicommutator} and \cite[Theorem 5.3]{Stasinski2} showed that if $\mathcal{R}$ is a principal ideal ring (e.g. $\mathbb{Z}/p^{\ell}\mathbb{Z}$ ) then any trace zero matrix in $\M_n(\mathcal{R})$ is also a commutator. This result used the fact that every principal ideal ring (PIR) is a finite product of rings, each of which is a homomorphic image of a PID; as established by Hungerford; see \cite [Lemma 10 and Corollary 11]{hungerford}.

A natural question that arises in this context is whether every trace-zero matrix $A\in \M_2(k)$ can be written as a commutator $A=[P, Q]$, where both $P$ and $Q$ are also trace-zero matrices in $\M_2(k)$, for any arbitrary field $k$. When $k$ has characteristic other than $2$, Thompson \cite[Theorem 3 (ii)]{thompsonij} proved that this is indeed the case. Combining this along with the result due the Lissner discussed above, we can observe that, for any odd prime $p$, any lift of a trace zero matrix of $\M_2(\mathbb{F}_p)$ in $\M_2(\mathbb{Z}_p)$, which has zero trace value, can be written as a single commutator $[\Lambda_1, \Lambda_2]$ where $\Lambda_1, \Lambda_2 \in \M_2(\mathbb{Z}_p)$ and $\mathbb{Z}_p$ denotes the ring of $p$-adic integers. However, Lissner's result does not guarantee that $\Lambda_1$ and $\Lambda_2$ also have trace zero. The proof techniques used in \cite[Theorem 6.3]{stasinskicommutator} and \cite[Theorem 5.3]{Stasinski2} ensure that it is possible to choose $\Lambda_1$ and $\Lambda_2$ as a trace-zero matrix in $\M_2(\mathbb{Z}_p)$. This motivates us to study the lifting problems in the context of Engel maps over trace-zero matrices in $\M_2(\mathcal{O})$, where $\mathcal{O}$ is a complete local ring with a residue field of characteristic other than two. 

For a commutative ring $\mathcal{R}$ with unity $\mathcal{R}\left\langle X, Y\right\rangle$ denotes the free associative algebra generated by two variables
$X, Y$ over $\mathcal{R}$. The elements are \emph{non-commutative polynomials} in two variables. For a
non-commutative polynomial $f\in \mathcal{R} \left\langle X, Y\right \rangle$ and a $\mathcal{R}$-algebra $\mathcal{S}$, we denote the images set of $f$  in $\mathcal{S}$ by $f(\mathcal{S}) = \left\{f(a,b) \mid a, b\in \mathcal{S}\right\}$. The $(m+1)$-th \emph{Engel polynomials} are defined by 
$$e_{m+1}(X, Y) = [\cdots[[X, \underbrace{Y], Y], \dots, Y]}_{m+1 \text{ times}}$$ 
which is in $ \mathcal{R}\left\langle X,Y\right\rangle$. Here $e_1(X, Y) = [X, Y] := XY - YX$ and $e_{m+1}(X, Y) = [e_m(X, Y), Y]$ for $m\geq 1$. Consider the matrix algebra $\mathcal{S} = \M_2(\mathcal{R})$. The $(m+1)$-th \emph{Engel map} (or Engel polynomial map) $e_{m+1}^{\mathcal{R}}\colon \M_2(\mathcal{R}) \times \M_2(\mathcal{R})\rightarrow \M_2(\mathcal{R})$ given by $(g_1, g_2) \mapsto e_{m+1}(g_1, g_2)$ defined by evaluation. In other words 
\begin{eqnarray*}
e_1^{\mathcal{R}}(g_1, g_2) &=& [g_1, g_2] = g_1g_2-g_2g_1\\ 
e_{m+1}^{\mathcal{R}}(g_1, g_2) &=& [e_m^{\mathcal{R}}(g_1, g_2), g_2] = e_m^{\mathcal{R}}(g_1, g_2)g_2-g_2e_m^{\mathcal{R}}(g_1, g_2)\ \text{for}\ m\geq 1.
\end{eqnarray*}
For a large enough field $K$ with characteristic other than two, Bandman et. al. (see \cite[Corollary 4.4]{bandmanliealgebra}) proved that $e_{m+1}^{K}\colon \mathfrak{sl}_2(K)\times \mathfrak{sl}_2(K)\rightarrow \mathfrak{sl}_2(K)$, defined by evaluation as above, is surjective. In the case $\mathcal{R}$ is a field we simply denote $e_{m+1}^{\mathcal{R}}$ by $\bar e_{m+1}$. The preceding discussions lead us to the following questions:
\begin{question}{\label{qn1}}
Let $\mathcal{O}$ be a local principal ideal ring, complete with respect to its maximal ideal $\mathfrak{m}=(\pi)$ with its residue field $k$ of characteristic $\neq 2$. Let the $(m+1)$-th Engel map $\bar e_{m+1}\colon \M_2(k) \times \M_2(k)\rightarrow \M_2(k)$ be such that $\im(\bar e_{m+1})=\mathfrak{sl}_2(k)$ for $m\geq 1$. Is it so that $\im(e_{m+1}^{\mathcal{O}}) = \mathfrak{sl}_2^{\circ}(\mathcal{O})$; where $e_{m+1}^{\mathcal{O}}\colon \M_2(\mathcal{O})\times \M_2(\mathcal{O})\rightarrow \M_2(\mathcal{O})$ and $\mathfrak{sl}_2^{\circ}(\mathcal{O}) = \left\{A\in\M_2(\mathcal{O})\mid\ \tr(A)=0\right\}$?
\end{question} 
\noindent For clarification we remark that $\mathfrak{sl}_2(\mathcal{O}) = \left\{A\in\M_2(\mathcal{O})\mid\ \tr(A) \in \mathfrak{m} \right\}$ and here we deal with $\mathfrak{sl}_2^{\circ}(\mathcal{O})$.
Further, if the answer to the question~\ref{qn1} is affirmative, then we ask the analogous question restricted to $\mathfrak{sl}_2^{\circ}(\mathcal{O})$. 
\begin{question}{\label{qn2}}
Let $\mathcal{O}$ be a local principal ideal ring, complete with respect to its maximal ideal $\mathfrak{m} = (\pi)$ with its residue field $k$ of characteristic $\neq 2$. Suppose the $(m+1)$-th Engel map $\bar e_{m+1}\colon \mathfrak{sl}_2(k)\times \mathfrak{sl}_2(k)\rightarrow \mathfrak{sl}_2(k)$ defined by evaluation is surjective. Does it imply that the $(m+1)$-th Engel map $ e_{m+1}^{\mathcal{O}}\colon \mathfrak{sl}_2^{\circ}(\mathcal{O}) \times \mathfrak{sl}_2^{\circ}(\mathcal{O}) \rightarrow \mathfrak{sl}_2^{\circ}(\mathcal{O})$ defined by evaluation is also surjective for $m\geq 1$?
\end{question}

In this article, we provide an affirmative answer to both questions. Theorem~\ref{th1} and Theorem~\ref{th2} answers  Question~\ref{qn1} and Question~\ref{qn2} respectively. Using Theorem~\ref{th2} together with the result due to Bandman et. al. \cite[Corollary 4.4]{bandmanliealgebra} we have been able to conclude that for large enough residue field $k$ with charcteristic $\neq 2$, the map $ e_{m+1}^{\mathcal{O}} \colon \mathfrak{sl}_2^{\circ}(\mathcal{O})\times \mathfrak{sl}_2^{\circ}(\mathcal{O}) \rightarrow \mathfrak{sl}_2^{\circ}(\mathcal{O})$ is surjective for $m\geq 1$; see Corollary~\ref{largeenough}. Our methods are motivated by the lifting strategy used in \cite{AvniGelanderKassabovShalev} and \cite{RoySingh2026}. Moreover, the study of fibers of an element with respect to the $(m+1)$-th Engel map over the residue field level plays a central role in proving the main theorems.

\subsection{Notation and Conventions}
Now, we recall some definitions and set the notation for the rest of the article. A commutative ring $\mathcal{R}$ with unity is said to be \emph{complete with respect to an ideal $I$} if the canonical map $\mathcal{R} \rightarrow \varprojlim\limits_{j\geq 1} \mathcal{R}/I^j\mathcal{R}$ is an isomorphism. We denote by $\mathcal{O}$ a local principal ideal ring that is complete with respect to its unique maximal ideal $\mathfrak{m} = (\pi)$. We assume that the residue field $k$ has characteristic $\neq 2$. For $\ell\geq 1$, we denote the quotient ring $\mathcal{O}_{\ell} = \mathcal{O}/ \pi^{\ell}\mathcal{O}$. This is a local principal ideal ring of length $\ell$ with unique maximal ideal $\mathfrak{m}_\ell = (\pi_\ell)$, where $\pi_\ell = \pi + (\pi^\ell)$. The canonical map $\mathcal{O}_{\ell + 1} \rightarrow \mathcal{O}_{\ell}$ is denoted by $\theta_\ell$. The kernel of the natural surjection $\theta_j \colon \mathcal{O}_{j+1} \rightarrow \mathcal{O}_j$ is $ker(\theta_j) = \frac{\mathfrak{m}^j}{\mathfrak{m}^{j+1}}$. Therefore, $ker(\theta_j)^2=0$ for each $j\geq 1$. Note that $\mathcal{O}_1 = \mathcal{O}/\mathfrak{m} \cong k$ and we simply denote $\theta_1 = \theta$.

For a local ring $\mathcal{A}$ with residue field $k$ and its unique maximal ideal $\mathfrak{m}$, let $\M_2(\mathcal{A})$ denote the set of all $2\times 2$ matrices with entries from $\mathcal{A}$. The general linear group $\GL_2(\mathcal{A})$ is the set of all elements $X\in \M_2(\mathcal{A})$ such that $\det(X) \in \mathcal{A}^{\times}$. The special linear group $\SL_2(\mathcal{A})$ is the set of all elements $X\in \M_2(\mathcal{A})$ with $\det(X) = 1$. The quotient map $\theta \colon \mathcal{A} \rightarrow k =\mathcal{A}/ \mathfrak{m}$ induces a canonical map $\theta \colon \M_2(\mathcal{A}) \rightarrow \M_2(k)$ denoted by $X\mapsto \overline{X}$. Thus, we get a map $\theta\colon \GL_2(\mathcal{A}) \rightarrow \GL_2(k)$ and $\theta \colon \SL_2(\mathcal{A}) \rightarrow \SL_2(k)$. Further, note that the quotient map $\theta\colon \mathcal{A} \rightarrow k$ also induces a surjective map on the polynomial ring $\theta \colon \mathcal{A}[x_1, x_2, \ldots, x_n] \rightarrow k[x_1, x_2, \ldots, x_n]$ defined by reduction of the coefficients under $\theta$. By abuse of notation, all the above reduction maps are denoted simply by $\theta$, which is usually clear from the context. Similarly, all the reductions from $\mathcal{O}_{\ell+1}$ level to $\mathcal{O}_{\ell}$ level are denoted by $\theta_{\ell}$. For any matrix $A\in \M_2(\mathcal{A})$ its characteristic polynomial is denoted by $\chi_A(t)$ which is $\det(tI - A)$. It is well known that, for any commutative ring with unity, every square matrix over that ring satisfies its own characteristic equation. Therefore $\chi_A(A) = \mathbf{0}$. A minimal degree monic annihilating polynomial of $A$ is said to be a minimal polynomial of $A$. 

The commutator word map $e_1^{\mathcal{A}} \colon \M_2(\mathcal{A}) \times \M_2(\mathcal{A}) \rightarrow \M_2(\mathcal{A})$ given by $(X, Y)\mapsto XY - YX = [X, Y]$ induces a commutator word map at residue field level $\overline{e}_1 \colon \M_2(k) \times \M_2(k) \rightarrow \M_2(k)$. For $m\geq 1$, the induced $(m+1)$-th Engel word map $e_{m+1}^{\mathcal{A}} \colon \M_2(\mathcal{A}) \times \M_2(\mathcal{A}) \rightarrow \M_2(\mathcal{A})$ is defined recursively as $e_{m+1}^{\mathcal{A}}(X,Y) = e_1^{\mathcal{A}}(e_{m}^{\mathcal{A}}(X,Y), Y)$. We denote $e_0^{\mathcal{A}}(X, Y) = X$. The reduced Engel word map over the field level is denoted by $\Bar{e}_{m+1}\colon \M_2(k) \times \M_2(k)\rightarrow \M_2(k) $ which is defined in a similar way (thus $\Bar{e}_{m+1}$ is $e_{m+1}^k$). The notation $\im(e_{m+1}^{\mathcal{A}})$ denotes the image of the $(m+1)$-th Engel word $e_{m+1}^{\mathcal{A}}$ over the additive group $\M_2(\mathcal{A})$. Therefore, $\im(\bar e_{m+1})$ means the image of the $(m+1)$-th Engel word $\bar e_{m+1}$ over the group $\M_2(k)$. Throughout this article, we assume that the characteristic of the residue field $k$ is $\neq 2$.

%% file: Lie-Engel-maps.tex
\section{Engel maps over Lie algebra: a lifting perspective}{\label{englifting}}

The Lie algebra of $\SL_n(k)$ is denoted by $\mathfrak{sl}_n(k) = \{P \in \M_n(k) \mid \tr(P) = 0\}$. We have a nondegenerate symmetric bilinear form given by $\langle P, Q \rangle = \tr(PQ)$ over the $k$ vector space $\mathfrak{sl}_n(k)$, when $char(k)$ is odd and $char(k) \nmid n$. With respect to this bilinear form, the orthogonal complement of a subspace $\mathfrak{U} \subset \mathfrak{sl}_n(k)$ will be denoted by $\mathfrak{U}^{\perp}$. Moreover, for an element $D\in \M_n(k)$, we use the notation $Z(D)$ to denote the centralizer of $D$ in $\mathfrak{sl}_n(k)$, i.e., $Z(D) = Z_{\M_n(k)}(D) \cap \mathfrak{sl}_n(k)$; where $Z_{\M_n(k)}(D)=\left\{P\in\M_n(k) \mid\ PD=DP\right\}$. For any finite dimensional vector space $\mathcal{V}$ if there is a non degenerate symmetric bilinear form $\mathfrak{B}$ associated with it, then for any subspace $\mathcal{W}$ of $\mathcal{V}$ we have $dim(\mathcal{V}) = dim(\mathcal{W}) + dim(\mathcal{W}^{\perp})$, where $\mathcal{W}^{\perp}$ is the orthogonal complement of $\mathcal{W}$ with respect to the bilinear form $\mathfrak{B}$. Moreover, this dimension formula and $\mathcal{W} \subset (\mathcal{W}^{\perp})^{\perp}$ together implies $(\mathcal{W}^{\perp})^{\perp} = \mathcal{W}$, in this case.

Let us recall the Lie algebra $\mathfrak{sl}_2(\mathcal{O}_{j+1}) = \{P\in \M_2(\mathcal{O}_{j+1}) \mid \tr(P)\in \mathfrak{m}_{j+1} \}$ for $j\geq 1$. Note that $\mathfrak{sl}_2(\mathcal{O}_{j+1})$ is generated by $\left\{ \left(\begin{array}{cc} 1 & 0 \\  0 & -1 \end{array}\right), \left(\begin{array}{cc}  0 & 1 \\ 0 & 0
\end{array}\right), \left(\begin{array}{cc} 0 & 0 \\ 1 & 0 \end{array}\right), \left(\begin{array}{cc} 0 & 0 \\ 0 & \pi_{j+1}
\end{array}\right)\right\}$ as a $\mathcal{O}_{j+1}$ module and hence it is a finitely generated $\mathcal{O}_{j+1}$ module for each $j \geq 1$. As each $\mathcal{O}_{j+1}$ is a principal ideal ring, it is Noetherian. Hence, any $\mathcal{O}_{j+1}$-submodule of the finitely generated $\mathcal{O}_{j+1}$-module $\mathfrak{sl}_2(\mathcal{O}_{j+1})$ is again finitely generated by \cite[Proposition 6.2 and 6.5]{Atiyahmacdonald}. Let us recall the notion of regular semisimple and cyclic elements.
\begin{definition}
Let $\mathcal{A}$ be a local ring with residue field $k$. An element $g\in \M_n(\mathcal{A})$ is said to be regular semisimple if $\bar g$ is a regular semisimple matrix in $\M_n(k)$. In other words, the characteristic polynomial of $\bar g$ has distinct roots in $\bar k$ (the algebraic closure of $k$). 
\end{definition}
\begin{definition}
Let $\mathcal{A}$ be a local ring with residue field $k$. An element $g\in \M_n(\mathcal{A})$ is said to be cyclic if $\bar g$ is a cyclic matrix in $\M_n(k)$. In other words, $k^n$ is a cyclic $k[t]$-module with the action of $t$ given via $\bar g$. 
\end{definition}
Consider the canonical isomorphism $\mathcal{T^*} \colon  \M_n(\mathcal{O}) \rightarrow \varprojlim\limits _{j\geq 1} \M_n(\mathcal{O}_j)$ defined by $A\mapsto (A_1, A_2, \ldots )$ where $A_j=[a_{u,v}  + \mathfrak{m}^j]$ for $j\geq 1$ (here $a_{u,v}$ is the $(u,v)$-th entry of the matrix $A=[a_{u,v}]$). Thus, at times, we think of an element $A\in \SL_n(\mathcal{O})$, as an element of $\M_n(\mathcal{O})$ and visualize it as $(A_1, A_2, \ldots)$ under the above isomorphism $\mathcal{T}^*$ (see also \cite[section 2]{PRS2026}). Note that $A_{j+1}$ is a lift of $A_j$ for each $j$, thus the determinant of each $A_j$ is $1$ and $A_j\in \SL_n(\mathcal{O}_j)$ for all $j\geq 1$. The following Lemma relates the lifting of a solution to each $j$-th level ring $\mathcal O_{j}$ to that over $\mathcal O$. 
\begin{lemma}{\label{lifting complete level}}
Let $\mathcal{O}$ be a local principal ideal ring, complete with respect to its maximal ideal $\mathfrak{m}=(\pi)$, and it has a residue field $k$ of characteristic $\neq 2$. Let $m\geq 1$ be a given positive integer and $A=(A_1, A_2, \ldots) \in \M_n(\mathcal{O})$. Suppose $A_1 = \bar e_{m+1}(X_1, Y_1)$ for some $X_1, Y_1\in \M_n(k)$. If for each $A_{j+1}\in \M_n(\mathcal{O}_{j+1})$ there exists lift  $(X_{j+1}, Y_{j+1})\in \M_n(\mathcal{O}_{j+1})^2$ of $(X_j, Y_j)$ with $A_{j+1} = e_{m+1}^{\mathcal{O}_{j+1}}(X_{j+1}, Y_{j+1})$ where $A_j = e_{m+1}^{\mathcal{O}_j}(X_j, Y_j)$ then, $A\in  \im(e_{m+1}^{\mathcal{O}})$.  
\end{lemma}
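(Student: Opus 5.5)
The plan is an inductive construction of a compatible sequence of pairs, which we then pass through the inverse limit using completeness of $\mathcal{O}$. We start from $(X_1,Y_1)\in \M_n(k)^2$ with $A_1=\bar e_{m+1}(X_1,Y_1)$. Suppose we have built $(X_j,Y_j)\in\M_n(\mathcal{O}_j)^2$ with $A_j=e^{\mathcal{O}_j}_{m+1}(X_j,Y_j)$, each pair reducing to the previous one under $\theta_{j-1}$. The hypothesis then gives a lift $(X_{j+1},Y_{j+1})$ of $(X_j,Y_j)$ with $A_{j+1}=e^{\mathcal{O}_{j+1}}_{m+1}(X_{j+1},Y_{j+1})$. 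Induction on $j$ produces sequences $(X_j)_{j\ge1}$ and $(Y_j)_{j\ge1}$ with $\theta_j(X_{j+1})=X_j$ and $\theta_j(Y_{j+1})=Y_j$ for all $j$. Only a choice at each stage is needed, via dependent choice; there is no uniqueness issue.

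Compatibility means $(X_1,X_2,\ldots)$ and $(Y_1,Y_2,\ldots)$ are elements of $\varprojlim_j \M_n(\mathcal{O}_j)$. Since $\mathcal{O}$ is complete with respect to $\mathfrak{m}$, the map $\mathcal{T}^*$ is an isomorphism, so there are unique $X,Y\in\M_n(\mathcal{O})$ with $\mathcal{T}^*(X)=(X_j)_j$ and $\mathcal{T}^*(Y)=(Y_j)_j$. This isomorphism is applied entrywise, using $\mathcal{O}\cong\varprojlim\mathcal{O}/\mathfrak{m}^j$.

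It remains to show $e^{\mathcal{O}}_{m+1}(X,Y)=A$. The reduction maps $\M_n(\mathcal{O})\to\M_n(\mathcal{O}_j)$ are ring homomorphisms. The Engel word is a noncommutative polynomial with integer coefficients, built recursively from products and differences, so it commutes with reduction:
\[
\bigl(e^{\mathcal{O}}_{m+1}(X,Y)\bigr)_j=e^{\mathcal{O}_j}_{m+1}(X_j,Y_j)=A_j
\]
for every $j$. This can be checked by a trivial induction on $m$, using $e_{m+1}=[e_m,Y]$. Hence $\mathcal{T}^*(e^{\mathcal{O}}_{m+1}(X,Y))=\mathcal{T}^*(A)$, and injectivity of $\mathcal{T}^*$ gives $A=e^{\mathcal{O}}_{m+1}(X,Y)\in\im(e^{\mathcal{O}}_{m+1})$.

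None of these steps is a real obstacle. The only point needing care is reading the hypothesis correctly: the lifting must be available for the particular $(X_j,Y_j)$ chosen at the previous stage, not merely for some solution at level $j$. That is exactly what is needed for the sequence to be compatible and for the inverse-limit argument to work.
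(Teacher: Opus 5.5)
Your proposal is correct and follows the same route as the paper: you assemble the compatible sequences $(X_j)_j$ and $(Y_j)_j$ into elements of $\varprojlim_j \M_n(\mathcal{O}_j)$, pull them back to $\M_n(\mathcal{O})$ via the isomorphism $\mathcal{T}^*$, and conclude from the compatibility of $e_{m+1}$ with reduction together with the injectivity of $\mathcal{T}^*$. The only difference is that you spell out the inductive choice of lifts and why the Engel word commutes with the reduction maps, which the paper dismisses as ``easy to see''.
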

\begin{proof}
We have, for each $A_{j+1}\in \M_n(\mathcal{O}_{j+1})$ there exists lift  $(X_{j+1},Y_{j+1})\in \M_n(\mathcal{O}_{j+1})^2$ of $(X_j,Y_j)$ (i.e. $X_{j+1}\equiv X_j\ (\text {mod}\ ker(\theta_j))$) such that $A_{j+1}=e_{m+1}^{\mathcal{O}_{j+1}}(X_{j+1}, Y_{j+1})$ where $A_j = e_{m+1}^{\mathcal{O}_j}(X_j, Y_j)$. Consider $\widehat{X}=(X_1, X_2,\ldots)$ and $\widehat{Y} = (Y_1, Y_2,\ldots)$ in $\varprojlim\limits _{j\geq 1} \M_n(\mathcal{O}_j)$. Let us denote $X=\mathcal{T}^{*-1}(\widehat{X})$ and $Y = \mathcal{T}^{*-1}(\widehat{Y})$. Then, from the definition of $\mathcal{T}^*$ it is easy to see that $\mathcal{T}^*(e_{m+1}^{\mathcal{O}}(X, Y))=(\bar e_{m+1}(X_1, Y_1), e_{m+1}^{\R}(X_2, Y_2),\ldots) = (A_1, A_2,\ldots)$. As $\mathcal{T}^*$ is an isomorphism therefore $A = e_{m+1}^{\mathcal{O}}(X, Y)$.
\end{proof}
\noindent If $A\in\M_n(\mathcal{O})$ is a cyclic matrix, then in the above notation $A_j\in \M_n(\mathcal{O}_j)$ must be cyclic for all $j$, and they have the same minimal and characteristic polynomial for each $j\geq 1$; see \cite[corollary 2.3]{PRS2026}. We explain some notation before proceeding further. 

Consider the Engel map $\bar e_{m+1} \colon \M_2(k)^2 \rightarrow \M_2(k)$ induced by the natural reduction of coefficients. For $\mathbf{\bar g}_1, \mathbf{\bar g}_2 \in \M_2(k)$ we have $\bar e_{m+1}(\mathbf{\bar g}_1,\mathbf{\bar g}_2)\in \M_2(k)$. Take any lift $\mathbf{g}$ of $\bar e_{m+1}(\mathbf{\bar g}_1, \mathbf{\bar g}_2)$ in $\M_2(\mathcal{O}_2)$ such that $\tr(\mathbf{g}) = 0$ (i.e. $\mathbf{g} \in \mathfrak{sl}_2^{\circ}(\mathcal{O}_2)$). In other words, one can consider the lift with respect to the natural surjection $\mathfrak{sl}_2^{\circ}(\mathcal{O}_2)\rightarrow\mathfrak{sl}_2(k)$. Then, there exists $\mathbf{g}_1, \mathbf{g}_2$ (lifts of $\mathbf{\bar g}_1, \mathbf{\bar g}_2$ respectively) in $\M_2(\mathcal{O}_2)$ such that $\mathbf{g} = e_{m+1}^{\R}(\mathbf{g}_1, \mathbf{g}_2)+ \pi_2 \mathcal{D}$, for some $\mathcal D\in \M_2(\mathcal{O}_2)$. This is because the kernel of the surjection $\M_2(\mathcal{O}_{{\ell}+1}) \rightarrow \M_2(k)$ is $\M_2(\mathfrak{m}_{\ell + 1}^{\ell})$ for $\ell\geq 1$. As $\tr(\mathbf{g}) = 0 =\tr(e_{m+1}^{\R}(\mathbf{g}_1, \mathbf{g}_2))$, therefore $\tr(\mathcal{D})\in \mathfrak{m}_2$, where $\pi_2$ is the generator of $\mathfrak{m}_2$ with $\pi_2^2 = 0$. Therefore $\mathcal{D} \in \mathfrak{sl}_2(\mathcal{O}_2)$. Finally, the Lie algebra $\mathfrak{sl}_2(\mathcal{O}_2) = \{P \in \M_2(\mathcal{O}_2) \mid \tr(P) \in ker(\mathcal{O}_2 \rightarrow K) = \mathfrak{m}_2\}$.

\subsection{Derivative map corresponding to $e_{m+1}$ :} 
Now we look at the expression of the derivative map corresponding to the $(m+1)$-th Engel word. Let $g_1, g_2\in \M_2(\mathcal{O}_{j+1})$ and $X, Y \in \mathfrak{sl}_2(\mathcal{O}_{j+1})$. We start with the expression of $e_1^{\mathcal{O}_{j+1}}(g_1 + \delta_jX, g_2 + \delta_jY)$ and associated derivative map $De_1^{\mathcal{O}_{j+1}}$ and inductively build the expression for $De_{m+1}^{\mathcal{O}_{j+1}}$ where $\delta_j = \pi_{j+1}^j$, i.e., $\delta_j^2=0$.

\begin{eqnarray*}
e_1^{\mathcal{O}_{j+1}}(g_1+\delta_jX,g_2+\delta_jY) 
&=& (g_1+\delta_jX)(g_2+\delta_jY)-(g_2+\delta_jY)(g_1+\delta_jX)\\
&=& (g_1g_2-g_2g_1)+\delta_j(g_1Y+Xg_2-g_2X-Yg_1)\\
&=& e_1^{\mathcal{O}_{j+1}}(g_1,g_2)+\delta_j\left( e_1^{\mathcal{O}_{j+1}}(X,g_2)-e_1^{\mathcal{O}_{j+1}}(Y,g_1)\right)
= e_1^{\mathcal{O}_{j+1}}(g_1,g_2)+\delta_j De_1^{\mathcal{O}_{j+1}}(X,Y)
 \end{eqnarray*}
where $De_1^{\mathcal{O}_{j+1}} \colon \mathfrak{sl}_2(\mathcal{O}_{j+1}) \times \mathfrak{sl}_2(\mathcal{O}_{j+1}) \rightarrow \mathfrak{sl}_2(\mathcal{O}_{j+1})$ is defined by \begin{equation}{\label{der 1}}
    (X,Y)\mapsto e_1^{\mathcal{O}_{j+1}}(X,g_2)-e_1^{\mathcal{O}_{j+1}}(Y,g_1).
\end{equation}
Now,\begin{eqnarray*}
&& e_2^{\mathcal{O}_{j+1}}(g_1+\delta_jX, g_2+\delta_jY) =  e_1^{\mathcal{O}_{j+1}}(e_1^{\mathcal{O}_{j+1}}(g_1 + \delta_jX, g_2+\delta_jY), g_2 + \delta_jY)\\
&=& e_1^{\mathcal{O}_{j+1}}(e_1^{\mathcal{O}_{j+1}}(g_1, g_2) + \delta_j De_1^{\mathcal{O}_{j+1}}(X, Y), g_2 + \delta_jY)\\
& =& (e_1^{\mathcal{O}_{j+1}}(g_1, g_2) + \delta_j De_1^{\mathcal{O}_{j+1}}(X, Y))(g_2 + \delta_jY)-(g_2 + \delta_jY)(e_1^{\mathcal{O}_{j+1}}(g_1, g_2) + \delta_j De_1^{\mathcal{O}_{j+1}}(X, Y))\\
&=&  {\small (e_1^{\mathcal{O}_{j+1}}(g_1, g_2)g_2-g_2 e_1^{\mathcal{O}_{j+1}}(g_1, g_2)) + \delta_j(e_1^{\mathcal{O}_{j+1}}(g_1, g_2)Y + De_1^{\mathcal{O}_{j+1}}(X, Y)g_2 - g_2 De_1^{\mathcal{O}_{j+1}}(X, Y)  
-Y e_1^{\mathcal{O}_{j+1}}(g_1, g_2))}\\
& =& e_2^{\mathcal{O}_{j+1}}(g_1, g_2) + \delta_j\left(e_1^{\mathcal{O}_{j+1}}(De_1^{\mathcal{O}_{j+1}}(X, Y), g_2)-e_1^{\mathcal{O}_{j+1}}(Y, e_1^{\mathcal{O}_{j+1}}(g_1, g_2)) \right)\\
&=& e_2^{\mathcal{O}_{j+1}}(g_1,g_2)+\delta_j De_2^{\mathcal{O}_{j+1}}(X,Y)
\end{eqnarray*}
where $De_2^{\mathcal{O}_{j+1}}\colon \mathfrak{sl}_2(\mathcal{O}_{j+1})\times \mathfrak{sl}_2(\mathcal{O}_{j+1})\rightarrow\mathfrak{sl}_2(\mathcal{O}_{j+1})$ is defined by \begin{equation}{\label{der 2}}
(X, Y)\mapsto \left( e_1^{\mathcal{O}_{j+1}}(De_1^{\mathcal{O}_{j+1}}(X,Y),g_2)-e_1^{\mathcal{O}_{j+1}}(Y,e_1^{\mathcal{O}_{j+1}}(g_1,g_2))\right).
\end{equation}
Now, we claim that,  
$$De_{m+1}^{\mathcal{O}_{j+1}}\colon \mathfrak{sl}_2(\mathcal{O}_{j+1}) \times \mathfrak{sl}_2(\mathcal{O}_{j+1}) \rightarrow \mathfrak{sl}_2(\mathcal{O}_{j+1})$$ is given by \begin{equation}{\label{der 3}}
(X, Y)\mapsto \left( e_1^{\mathcal{O}_{j+1}}(De_m^{\mathcal{O}_{j+1}}(X,Y),g_2)-e_1^{\mathcal{O}_{j+1}}(Y,e_m^{\mathcal{O}_{j+1}}(g_1,g_2))\right)\ \text{for any}\ m\geq 1.
\end{equation}
This can easily be seen by the following inductive step: 

Let $$ e_m^{\mathcal{O}_{j+1}}(g_1+\delta_jX, g_2+\delta_jY)= e_m^{\mathcal{O}_{j+1}}(g_1, g_2)+\delta_j De_m^{\mathcal{O}_{j+1}}(X,Y).$$
Then 

\begin{eqnarray*}
&& e_{m+1}^{\mathcal{O}_{j+1}}(g_1+\delta_jX, g_2+\delta_jY) =  e_1^{\mathcal{O}_{j+1}}(e_m^{\mathcal{O}_{j+1}}(g_1 + \delta_jX, g_2+\delta_jY), g_2 + \delta_jY)\\
&=& e_1^{\mathcal{O}_{j+1}}(e_m^{\mathcal{O}_{j+1}}(g_1, g_2) + \delta_j De_m^{\mathcal{O}_{j+1}}(X, Y), g_2 + \delta_jY)\\
& =& (e_m^{\mathcal{O}_{j+1}}(g_1, g_2) + \delta_j De_m^{\mathcal{O}_{j+1}}(X, Y))(g_2 + \delta_jY)-(g_2 + \delta_jY)(e_m^{\mathcal{O}_{j+1}}(g_1, g_2) + \delta_j De_m^{\mathcal{O}_{j+1}}(X, Y))\\
&=&  {\small (e_m^{\mathcal{O}_{j+1}}(g_1, g_2)g_2-g_2 e_m^{\mathcal{O}_{j+1}}(g_1, g_2)) + \delta_j(e_m^{\mathcal{O}_{j+1}}(g_1, g_2)Y + De_m^{\mathcal{O}_{j+1}}(X, Y)g_2 - g_2 De_m^{\mathcal{O}_{j+1}}(X, Y)  
-Y e_m^{\mathcal{O}_{j+1}}(g_1, g_2))}\\
& =& e_{m+1}^{\mathcal{O}_{j+1}}(g_1, g_2) + \delta_j\left(e_1^{\mathcal{O}_{j+1}}(De_m^{\mathcal{O}_{j+1}}(X, Y), g_2)-e_1^{\mathcal{O}_{j+1}}(Y, e_m^{\mathcal{O}_{j+1}}(g_1, g_2)) \right).
\end{eqnarray*}

This $De_{m+1}^{\mathcal{O}_{j+1}}$ is called the \emph{$(m+1)$-th derivative map} corresponding to $e_{m+1}^{\mathcal{O}_{j+1}}$ at $(g_1, g_2)$.

\begin{remark}{\label{scalarmatrixderivativemap}}
If $X$ and $Y$ are scalar matrices in $\M_2(\mathcal{O}_{j+1})$ (not necessary that $X, Y\in \mathfrak{sl}_2(\mathcal{O}_{j+1})$) then $De_{m+1}^{\mathcal{O}_{j+1}}(X, Y)$ (here $De_{m+1}^{\mathcal{O}_{j+1}}\colon \M_2(\mathcal{O}_{j+1}) \times \M_2(\mathcal{O}_{j+1}) \rightarrow \M_2(\mathcal{O}_{j+1})$ defined recursively as in the case $\mathfrak{sl}_2(\mathcal{O}_{j+1})$) at $(g_1, g_2)$ is $\mathbf{0}$. This easily follows from the fact that  $De_1^{\mathcal{O}_{j+1}}(X,Y)$ at $(g_1, g_2)$ is $\mathbf{0}$ in this case.
\end{remark}
\begin{lemma}{\label{lem-adelic 2}}
Let $\mathcal{O}_2$ be a local principal ideal ring of length $2$ with residue field $k$ of characteristic $\neq 2$. Let $\mathbf{\bar g}_1, \mathbf{\bar g}_2 \in \SL_2(k)$ be such that the groups $Z (\bar e_1(\mathbf{\bar g}_1, \mathbf{\bar g}_2)) \cap Z(\mathbf{\bar g}_2) = \{\mathbf{0}\}$ in $\mathfrak{sl}_2(k)$. Further, assume that $Z\left(\mathbf{\bar g}_2\right) \cap Z\left(\mathbf{\bar g}_2\right)^{\perp}$ is trivial in $\mathfrak{sl}_2(k)$. Then, for any lift $\mathbf{h}$ of $\bar e_{2}(\mathbf{\bar g}_1, \mathbf{\bar g}_2)$ in $\mathfrak{sl}_2^{\circ}(\mathcal O_2)$ there exists $\mathbf{h}_1, \mathbf{ h}_2 \in \M_2(\mathcal{O}_2)$ such that $\mathbf{h} = e_{2}^{\mathcal{O}_2}(\mathbf{h}_1, \mathbf{h}_2)$ where $\mathbf{h}_1, \mathbf{ h}_2$ are lifts of $\mathbf{\bar g}_1$ and $\mathbf{\bar g}_2$ respectively.
\end{lemma}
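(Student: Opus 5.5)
We reduce the lifting problem to the surjectivity of the derivative map $De_2^{\mathcal O_2}$ at a fixed pair of lifts, computed modulo $\pi_2$. Fix arbitrary lifts $\mathbf g_1,\mathbf g_2\in\M_2(\mathcal O_2)$ of $\mathbf{\bar g}_1,\mathbf{\bar g}_2$. By the discussion preceding the derivative subsection, $\mathbf h = e_2^{\mathcal O_2}(\mathbf g_1,\mathbf g_2)+\pi_2\mathcal D$ for some $\mathcal D\in\mathfrak{sl}_2(\mathcal O_2)$. We seek $X,Y$ with $\mathbf h_1=\mathbf g_1+\pi_2X$ and $\mathbf h_2=\mathbf g_2+\pi_2Y$. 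By the expansion with $\delta_1=\pi_2$,
\[
e_2^{\mathcal O_2}(\mathbf h_1,\mathbf h_2)=e_2^{\mathcal O_2}(\mathbf g_1,\mathbf g_2)+\pi_2\, De_2^{\mathcal O_2}(X,Y).
\]
It therefore suffices to solve $\pi_2 De_2(X,Y)=\pi_2\mathcal D$. Since $\pi_2\mathfrak m_2=0$, only residues matter, so we need $\overline{\mathcal D}\in \operatorname{Im}(\overline{De_2})$, where $\overline{De_2}\colon\M_2(k)^2\to\M_2(k)$ is the derivative at $(\mathbf{\bar g}_1,\mathbf{\bar g}_2)$. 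Here $\overline{\mathcal D}\in\mathfrak{sl}_2(k)$, because $\tr\mathcal D\in\mathfrak m_2$. By Remark~\ref{scalarmatrixderivativemap}, scalar parts do not matter, so we may take $X,Y\in\mathfrak{sl}_2(k)$ (lifted arbitrarily). The claim thus reduces to showing that
\[
\overline{De_2}(X,Y)=\big[[X,\bar g_2],\bar g_2\big]-\big[[Y,\bar g_1],\bar g_2\big]-\big[Y,\bar e_1(\bar g_1,\bar g_2)\big]
\]
maps $\mathfrak{sl}_2(k)^2$ onto $\mathfrak{sl}_2(k)$.

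To prove surjectivity we use the nondegenerate trace form on $\mathfrak{sl}_2(k)$, where $\operatorname{char}k\neq 2$. We show that the orthogonal complement of the image is zero. Suppose $Z\in\mathfrak{sl}_2(k)$ with $\tr(Z\,\overline{De_2}(X,Y))=0$ for all $X,Y$. Invariance $\tr([A,B]C)=\tr(A[B,C])$ converts each term into a statement about $Z$:

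\begin{itemize}
\item Taking $Y=0$ gives $\tr(X[\bar g_2,[\bar g_2,Z]])=0$ for all $X\in\mathfrak{sl}_2(k)$. Since $[\bar g_2,[\bar g_2,Z]]\in\mathfrak{sl}_2(k)$ and the form is nondegenerate, $\operatorname{ad}_{\bar g_2}^2 Z=0$.
\item Taking $X=0$ gives, for all $Y$, $\tr\big(Y([\bar g_1,[\bar g_2,Z]]+[\bar e_1,Z])\big)=0$, up to signs. Hence $[\bar g_1,[\bar g_2,Z]]+[\bar e_1(\bar g_1,\bar g_2),Z]=0$.
\end{itemize}

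From $\operatorname{ad}_{\bar g_2}^2Z=0$, set $W=[\bar g_2,Z]$. Then $W\in Z(\bar g_2)$, and also $W\in Z(\bar g_2)^\perp$, because $\tr(W C)=\tr([\bar g_2,Z]C)=-\tr(Z[\bar g_2,C])=0$ for every $C\in Z(\bar g_2)$. The hypothesis $Z(\bar g_2)\cap Z(\bar g_2)^\perp=0$ then gives $W=0$, i.e.\ $Z\in Z(\bar g_2)$. The second relation now reduces to $[\bar e_1(\bar g_1,\bar g_2),Z]=0$. Thus $Z\in Z(\bar e_1)\cap Z(\bar g_2)=\{\mathbf 0\}$. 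Hence the image is all of $\mathfrak{sl}_2(k)$, and choosing $(X,Y)$ with $\overline{De_2}(X,Y)=\overline{\mathcal D}$ gives the required lifts.

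The main obstacle is the bookkeeping in the adjoint-transfer step: getting the signs and order of brackets right, and verifying that $\overline{De_2}$ really lands in and is tested against $\mathfrak{sl}_2(k)$ rather than $\M_2(k)$. The trace of a commutator is zero, so the image lies in $\mathfrak{sl}_2(k)$; nondegeneracy on $\mathfrak{sl}_2(k)$ then suffices. The other delicate point is the step $\operatorname{ad}^2Z=0\Rightarrow\operatorname{ad}Z=0$, which is exactly where the second hypothesis is used.
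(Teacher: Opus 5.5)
Your proposal is correct and follows the paper's proof essentially step for step. You use the same orthogonal-complement test with the two cases $Y=\mathbf{0}$ and $X=\mathbf{0}$, the same use of $Z(\mathbf{\bar g}_2)\cap Z(\mathbf{\bar g}_2)^{\perp}=\{\mathbf{0}\}$ to force $[\mathbf{\bar g}_2,T]=\mathbf{0}$, and then $Z(\bar e_1(\mathbf{\bar g}_1,\mathbf{\bar g}_2))\cap Z(\mathbf{\bar g}_2)=\{\mathbf{0}\}$ to conclude. The only difference is the last step: you use $\pi_2\mathfrak{m}_2=0$ to reduce directly to surjectivity over the residue field, where the paper uses a Nakayama argument. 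Your version is slightly cleaner, and it also avoids the paper's overstatement that $De_2^{\mathcal{O}_2}$ is onto all of $\mathfrak{sl}_2(\mathcal{O}_2)$; its image consists of trace-zero matrices, so it can at most be $\mathfrak{sl}_2^{\circ}(\mathcal{O}_2)$.
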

\begin{proof}
It is enough to show the surjectivity of the derivative map at $(\mathbf{\bar g}_1, \mathbf{\bar g}_2)$, corresponding to $\bar e_{2}$ at the Lie algebra level. Take any lift $\mathbf{g}$ of $\bar e_{2}(\mathbf{\bar g}_1, \mathbf{\bar g}_2)$ in $\mathfrak{sl}_2^{\circ}(\mathcal{O}_2)$. As explained above, there exists $\mathbf{g}_1, \mathbf{g}_2$ (lifts of $\mathbf{\bar g}_1, \mathbf{\bar g}_2$ respectively) in $\M_2(\mathcal{O}_2)$ such that $\mathbf{g} = e_{2}^{\R}(\mathbf{g}_1, \mathbf{g}_2)+ \pi_2 \mathcal{D}$, for some $\mathcal D \in \M_2(\mathcal{O}_2)$, where $\tr(\mathcal{D})\in \mathfrak{m}_2$ (i.e., $\mathcal{D} \in \mathfrak{sl}_2(\mathcal{O}_2)$) and $\pi_2$ is the generator of $\mathfrak{m}_2$ with $\pi_2^2 = 0$. Let $X, Y\in \mathfrak{sl}_2(\mathcal{O}_2)$. 
    
As explained above, we have $e_2^{\mathcal{O}_{j+1}}(\mathbf g_1 + \delta_jX, \mathbf g_2 + \delta_jY) = e_2^{\mathcal{O}_{j+1}}(\mathbf g_1, \mathbf g_2) + \pi_2 De_2^{\mathcal{O}_{j+1}}(X, Y)$ where the derivative of the map $e_2$, denoted as $De^{\R}_2$, at $(\mathbf{g}_1, \mathbf{g}_2)$ is obtained from the Equation~(\ref{der 2}) above:
$$De^{\R}_2\colon  (X,Y)\mapsto \left( e_1^{\mathcal{O}_{2}}(De_1^{\mathcal{O}_{2}}(X,Y),\mathbf g_2)-e_1^{\mathcal{O}_{2}}(Y,e_1^{\mathcal{O}_{2}}(\mathbf g_1,\mathbf g_2))\right).$$ 
The notation $De^{\R}_2$ indicates that we are working over $\mathcal{O}_2$. We need to show that this map is surjective. 

The non-degenerate symmetric bilinear form $\langle P, Q\rangle = \tr(PQ)$ is conjugation invariant, i.e., for any $\bar g\in \GL_2(k)$, $\langle \bar gP\bar g^{-1}, \bar gQ\bar g^{-1}\rangle = \tr(\bar gPQ\bar g^{-1})=\tr(PQ)=\left\langle P, Q\right\rangle$. We use the notation $\overline{De_2}$ to denote the same derivative map after reduction mod $\mathfrak{m}_2$, i.e., over $\mathfrak{sl}_2(k)$. We show, under the given assumptions, that the orthogonal complement $\im(\overline{De}_2)^{\perp}$ is trivial. For this, let $T$ be any vector in $\im(\overline{De}_2)^{\perp}$. Then, $\langle T, \overline{De}_2(X, Y)\rangle = 0 \ \forall X, Y\in \mathfrak{sl}_2(k)$. Now,
\begin{eqnarray*}
\overline{D}e_2(X,Y)&=&  \bar e_1\left(\overline{De}_1(X,Y),\mathbf {\bar g}_2\right)-\bar e_1\left(Y,\bar e_1(\mathbf {\bar g}_1,\mathbf {\bar g}_2)\right).
\end{eqnarray*}

We consider the following two cases.

\textbf{Case I. When $(Y = \mathbf{0})$:} In this case $\langle \overline{De}_2(X, Y), T \rangle = 0$. That is, for all $X$, we have
\begin{eqnarray*}
0 &=& \langle \overline{De}_2(X,\mathbf{0}),T \rangle = \left\langle \bar e_1\left(\overline{De}_1(X,\mathbf{0}),\mathbf {\bar g}_2\right),T\right\rangle-\left\langle \bar e_1\left(\mathbf{0},\bar e_1(\mathbf {\bar g}_1,\mathbf {\bar g}_2)\right),T\right\rangle  \\
&=& \left\langle\bar e_1\left(\overline{De}_1(X,\mathbf{0}),\mathbf {\bar g}_2\right),T\right\rangle\\
&=& \left\langle \bar e_1(\bar e_1(X,\mathbf{\bar g}_2),\mathbf{\bar g}_2),T\right\rangle\ \ \ \ \ [\text{As} \ \overline{De}_1(X,\mathbf{0})=\bar e_1(X,\mathbf{\bar g}_2),\ \text{by}\ \text{Equation}~(\ref{der 1})]\\
&=& \left\langle \bar e_1(X,\mathbf{\bar g}_2)\mathbf{\bar g}_2-\mathbf{\bar g}_2\bar e_1(X,\mathbf{\bar g}_2),T\right\rangle 
= \left\langle X\mathbf{\bar g}_2^2-\mathbf{\bar g}_2X\mathbf{\bar g}_2- \mathbf{\bar g}_2X\mathbf{\bar g}_2+\mathbf{\bar g}_2^2X,T\right\rangle\\
&=& \tr(X\mathbf{\bar g}_2^2T-\mathbf{\bar g}_2X\mathbf{\bar g}_2T- \mathbf{\bar g}_2X\mathbf{\bar g}_2T+\mathbf{\bar g}_2^2XT) 
= \tr\left(X(\mathbf{\bar g}_2^2T-\mathbf{\bar g}_2T\mathbf{\bar g}_2-\mathbf{\bar g}_2T\mathbf{\bar g}_2+T\mathbf{\bar g}_2^2)\right) \\
&=& \left\langle X, \mathbf{\bar g}_2U_1-U_1\mathbf{\bar g}_2\right\rangle \ \ \ \ \ \text{Where}\ U_1=[\mathbf{\bar g}_2,T] \text{\ and use\ } tr(AB)=tr(BA) .
\end{eqnarray*} 
Using the non-degeneracy of the bilinear form we get $\mathbf{\bar g}_2U_1 - U_1\mathbf{\bar g}_2 = \mathbf{0}$. Hence $U_1\in Z(\mathbf{g}_2)$. As $U_1=[\mathbf{\bar g}_2, T]$ therefore $\left\langle U_1, \gamma\right \rangle = \tr(\mathbf{\bar g}_2T \gamma -T \mathbf{\bar g}_2 \gamma) = 0$ for all $\gamma\in Z(\mathbf{\bar g}_2)$. This further implies $U_1\in Z(\mathbf{\bar g}_2)^{\perp}$. Therefore, we obtain:
\begin{equation}{\label{ortho1}}
U_1\in Z(\mathbf{\bar g}_2)\cap Z(\mathbf{\bar g}_2)^{\perp}.
\end{equation}
 
\textbf{Case II. When $(X=\mathbf{0})$:} In  this case $\langle T, \overline{De}_2(X, Y)\rangle = 0$. That is for all $Y$, we have,
\begin{eqnarray*}
0&=& \langle \overline{De}_2(\mathbf{0},Y),T\rangle = \langle  \bar e_1\left(\overline{De}_1(\mathbf{0},Y),\mathbf {\bar g}_2\right)-\bar e_1\left(Y,\bar e_1(\mathbf {\bar g}_1,\mathbf {\bar g}_2)\right),T \rangle \\
&=& \left\langle  \bar e_1\left(\overline{De}_1(\mathbf{0},Y),\mathbf {\bar g}_2\right),T\right\rangle-\left\langle \bar e_1\left(Y,\bar e_1(\mathbf {\bar g}_1,\mathbf {\bar g}_2)\right),T\right\rangle \\
&=& \left\langle \overline{De}_1(\mathbf{0},Y)\mathbf{\bar g}_2-\mathbf{\bar g}_2\overline{De}_1(\mathbf{0},Y),T\right\rangle-\left\langle Y\bar e_1(\mathbf {\bar g}_1,\mathbf {\bar g}_2)-\bar e_1(\mathbf {\bar g}_1,\mathbf {\bar g}_2)Y,T \right\rangle\\
&=& \left\langle \overline{De}_1(\mathbf{0},Y),\mathbf{\bar g}_2T-T\mathbf{\bar g}_2\right\rangle-\left\langle Y, \bar e_1(\mathbf {\bar g}_1,\mathbf {\bar g}_2)T-T\bar e_1(\mathbf {\bar g}_1,\mathbf {\bar g}_2)\right\rangle
= \left\langle\overline{De}_1(\mathbf{0},Y),U_1 \right\rangle-\left\langle Y, [\bar e_1(\mathbf {\bar g}_1,\mathbf {\bar g}_2),T]\right\rangle.
\end{eqnarray*}
This further implies 
\begin{equation}{\label{ortho2}}
\left\langle Y, [\bar e_1(\mathbf {\bar g}_1,\mathbf {\bar g}_2), T] \right \rangle = \left \langle \overline{De}_1(\mathbf{0}, Y), U_1 \right \rangle.
\end{equation}
Now, we claim that $U_1= \mathbf{0}$. As $Z(\mathbf{\bar g}_2) \cap Z(\mathbf{\bar g}_2)^{\perp} = \{\mathbf{0}\}$ by assumption, therefore Equation~(\ref{ortho1}) implies $U_1 = \mathbf{0}$. Hence, the claim is true.

Since $U_1=[\mathbf{\bar g}_2,T]=0$, therefore $T\in Z(\mathbf{\bar g}_2)$. Moreover, substituting $U_1=\mathbf{0}$ in Equation~(\ref{ortho2}) we obtain $ [\bar e_1(\mathbf {\bar g}_1, \mathbf {\bar g}_2), T] = \mathbf{0}$, by the non-degeneracy of the bilinear form defined above. Therefore, we have the following properties of $T$: (1) $T\in Z(\mathbf{\bar g}_2)$ and (2)  $ [\bar e_1(\mathbf {\bar g}_1, \mathbf {\bar g}_2),T] = \mathbf{0}$ implies $T\in Z(\bar e_1(\mathbf {\bar g}_1, \mathbf {\bar g}_2))$. 
These together imply $T\in  Z(\mathbf{\bar g}_2)\cap Z(\bar e_1(\mathbf {\bar g}_1, \mathbf {\bar g}_2))$. By the assumption $ Z (\bar e_1(\mathbf{\bar g}_1, \mathbf{\bar g}_2)) \cap Z(\mathbf{\bar g}_2)=\{\mathbf{0}\}$. This gives us $T=\mathbf{0}$.

Therefore, under the given assumptions, we obtain $\im(\overline{De}_2)^\perp \cap \mathfrak{sl}_2(k) = \{\mathbf{0}\}$. The dimension formula $dim(\im(\overline{De}_2)) + dim(\im(\overline{De}_2)^\perp) = dim(\mathfrak{sl}_2(k))$ implies $coker(\overline{De}_2) = \mathfrak{sl}_2(k)/ \im(\overline{De}_2) = \{0\}$. 
Note that $\im(De^{\mathcal O_2}_2)$ is an $\mathcal{O}_2$ submodule of the $\mathcal{O}_2$ module $\mathfrak{sl}_2(\mathcal{O}_2)$. Now, consider the short exact sequence 
\[\begin{tikzcd}
\mathbf{0} & {\im(De^{\R}_2)} & {\mathfrak{sl}_2(\mathcal{O}_2)} & {\mathfrak{sl}_2(\mathcal{O}_2)/\im(De^{\R}_2)} & \mathbf{0}.
	\arrow[from=1-1, to=1-2]
	\arrow[from=1-2, to=1-3]
	\arrow[from=1-3, to=1-4]
	\arrow[from=1-4, to=1-5]
\end{tikzcd}\]
Tensoring with $\mathcal{O}_2/\mathfrak{m}_2$, and using the right exactness we obtain $\left(\mathfrak{sl}_2(\mathcal{O}_2)/\im(De^{\R}_2)\right) \bigotimes_{\mathcal{O}_2} \mathcal{O}_2/ \mathfrak{m}_2\cong coker(\overline{De}_2) = \{\mathbf{0}\}$. Let us denote $\mathcal{M} = \mathfrak{sl}_2(\mathcal{O}_2)/ \im(De^{\R}_2)$; clearly it is a finitely generated $\mathcal{O}_2$ module. Therefore $\mathcal{M}/ \mathfrak{m}_2\mathcal{M} = \{\mathbf{0}\}$. Invoking Proposition 2.6 \cite{Atiyahmacdonald} we obtain $\im(De^{\R}_2) = \mathfrak{sl}_2(\mathcal{O}_2)$. This completes the proof.
\end{proof}

\begin{proposition}{\label{adelic 2}}
Let $\mathcal{O}$ be a local principal ideal ring, complete with respect to its maximal ideal $\mathfrak{m}=(\pi)$ with residue field $k$ of characteristic $\neq 2$. Let $\mathbf{\bar g}_1, \mathbf{\bar g}_2 \in \M_2(k)$ be such that  the groups $ Z (\bar e_m(\mathbf{\bar g}_1,\mathbf{\bar g}_2))\cap Z(\mathbf{\bar g}_2)=\{\mathbf{0}\}$ in $\mathfrak{sl}_2(k)$. Further, assume that $Z\left(\mathbf{\bar g}_2\right)\cap Z\left(\mathbf{\bar g}_2\right)^{\perp}$ is trivial in $\mathfrak{sl}_2(k)$. Then, for any lift $\mathbf{h}$ of $\bar e_{m+1}(\mathbf{\bar g}_1, \mathbf{\bar g}_2)$ in $\mathfrak{sl}_2^{\circ}(\mathcal O)$ there exists $\mathbf{h}_1, \mathbf{ h}_2\in\M_2(\mathcal{O})$ such that $\mathbf{h} = e_{m+1}^{\mathcal{O}}(\mathbf{h}_1, \mathbf{h}_2)$ where $\mathbf{h}_1, \mathbf{ h}_2$ are lifts of $\mathbf{\bar g}_1$ and $\mathbf{\bar g}_2$ respectively.
\end{proposition}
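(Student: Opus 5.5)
We combine the level-by-level lifting criterion of Lemma~\ref{lifting complete level} with a generalisation of the derivative computation in Lemma~\ref{lem-adelic 2} from $e_2$ to $e_{m+1}$. The key point is that the hypotheses only involve the residue-field data $\mathbf{\bar g}_1,\mathbf{\bar g}_2$. Hence the same argument can be run at every level $\mathcal{O}_{j+1}\to\mathcal{O}_j$, with $\pi_2$ replaced by $\delta_j=\pi_{j+1}^j$. Indeed, $\ker(\theta_j)$ is a one-dimensional $k$-space killed by $\mathfrak{m}$, so the linearised problem at each step really lives over $k$.

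\textbf{Step 1: surjectivity of $\overline{De}_{m+1}$ over $\mathfrak{sl}_2(k)$.} By Equation~(\ref{der 3}) at $(\mathbf{\bar g}_1,\mathbf{\bar g}_2)$,
$$\overline{De}_{m+1}(X,Y)=[\overline{De}_m(X,Y),\mathbf{\bar g}_2]-[Y,\bar e_m(\mathbf{\bar g}_1,\mathbf{\bar g}_2)].$$
Take $T\in\im(\overline{De}_{m+1})^{\perp}$.

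First set $Y=\mathbf{0}$. Then $\overline{De}_{m+1}(X,\mathbf 0)=\mathrm{ad}_{-\mathbf{\bar g}_2}^{\,m+1}(X)$ up to sign, so the identity $\langle[A,B],C\rangle=\langle A,[B,C]\rangle$ gives $\mathrm{ad}_{\mathbf{\bar g}_2}^{m+1}(T)=\mathbf 0$. We now show $U_1:=[\mathbf{\bar g}_2,T]=\mathbf 0$. Put $U_i=\mathrm{ad}_{\mathbf{\bar g}_2}^{i}(T)$. If $U_{i+1}=\mathbf 0$ with $i\ge1$, then $U_i\in Z(\mathbf{\bar g}_2)$. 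Also $U_i=[\mathbf{\bar g}_2,U_{i-1}]\in\im(\mathrm{ad}_{\mathbf{\bar g}_2})=Z(\mathbf{\bar g}_2)^{\perp}$, by invariance of the form. So the hypothesis $Z(\mathbf{\bar g}_2)\cap Z(\mathbf{\bar g}_2)^\perp=\{\mathbf 0\}$ forces $U_i=\mathbf 0$. Descending from $i=m$ to $i=1$ yields $U_1=\mathbf 0$, i.e.\ $T\in Z(\mathbf{\bar g}_2)$.

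Next set $X=\mathbf 0$. Unwinding the recursion, $\overline{De}_{m+1}(\mathbf 0,Y)$ equals $-[Y,\bar e_m]$ plus terms of the form $[\,\cdot\,,\mathbf{\bar g}_2]$. Pairing with $T$, every term of the form $\langle[W,\mathbf{\bar g}_2],T\rangle=\langle W,[\mathbf{\bar g}_2,T]\rangle$ vanishes because $U_1=\mathbf 0$. This leaves $\langle Y,[\bar e_m(\mathbf{\bar g}_1,\mathbf{\bar g}_2),T]\rangle=0$ for all $Y$. By non-degeneracy, $T\in Z(\bar e_m(\mathbf{\bar g}_1,\mathbf{\bar g}_2))\cap Z(\mathbf{\bar g}_2)=\{\mathbf 0\}$. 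The dimension formula then gives $\im(\overline{De}_{m+1})=\mathfrak{sl}_2(k)$.

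\textbf{Step 2: lifting from $\mathcal{O}_j$ to $\mathcal{O}_{j+1}$.} Suppose $\mathbf h_j=e^{\mathcal{O}_j}_{m+1}(X_j,Y_j)$ with $(X_j,Y_j)$ lifting $(\mathbf{\bar g}_1,\mathbf{\bar g}_2)$. Choose arbitrary lifts $g_1,g_2\in\M_2(\mathcal{O}_{j+1})$ of $X_j,Y_j$. Then $\mathbf h_{j+1}-e^{\mathcal{O}_{j+1}}_{m+1}(g_1,g_2)=\delta_j\mathcal D$, and because both terms have trace zero, $\mathcal D$ may be taken in $\mathfrak{sl}_2(\mathcal{O}_{j+1})$. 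Since $\delta_j\mathfrak{m}=0$, the equation $\delta_jDe^{\mathcal{O}_{j+1}}_{m+1}(X,Y)=\delta_j\mathcal D$ depends only on the reductions of $X$, $Y$ and $\mathcal D$ mod $\mathfrak m$. The derivative at $(g_1,g_2)$ reduces to $\overline{De}_{m+1}$ at $(\mathbf{\bar g}_1,\mathbf{\bar g}_2)$. By Step~1 we can therefore solve $\overline{De}_{m+1}(\bar X,\bar Y)=\bar{\mathcal D}$. Setting $X_{j+1}=g_1+\delta_jX$ and $Y_{j+1}=g_2+\delta_jY$ gives $e^{\mathcal{O}_{j+1}}_{m+1}(X_{j+1},Y_{j+1})=\mathbf h_{j+1}$. (Alternatively, one can copy the Nakayama argument of Lemma~\ref{lem-adelic 2} over $\mathcal{O}_{j+1}$.) One subtlety to check: $\mathcal D$ only has trace in $\mathfrak m$. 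This causes no difficulty, since after multiplying by $\delta_j$ only $\bar{\mathcal D}\in\mathfrak{sl}_2(k)$ matters. Remark~\ref{scalarmatrixderivativemap} also handles any scalar parts.

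\textbf{Step 3: passing to $\mathcal{O}$.} Apply Lemma~\ref{lifting complete level} to the compatible sequence $(X_j,Y_j)$. The resulting $\mathbf h_1,\mathbf h_2\in\M_2(\mathcal{O})$ satisfy $\mathbf h=e^{\mathcal O}_{m+1}(\mathbf h_1,\mathbf h_2)$ and reduce to $\mathbf{\bar g}_1,\mathbf{\bar g}_2$.

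The main obstacle is Step~1, specifically the descent showing $U_1=\mathbf 0$ from $\mathrm{ad}^{m+1}_{\mathbf{\bar g}_2}(T)=\mathbf 0$, together with tracking the nested terms in $\overline{De}_{m+1}(\mathbf 0,Y)$. I would first verify the $X=\mathbf 0$ expansion by induction on $m$, checking that each nested term has the form $[\,\cdot\,,\mathbf{\bar g}_2]$.
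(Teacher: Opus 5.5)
Your proposal is correct and is essentially the paper's proof. Step~1 reproduces the paper's orthogonal-complement argument: the descent $U_{m+1}=\dots=U_1=\mathbf 0$ via $Z(\mathbf{\bar g}_2)\cap Z(\mathbf{\bar g}_2)^{\perp}=\{\mathbf 0\}$, followed by the case $X=\mathbf 0$, which forces $T\in Z(\bar e_m(\mathbf{\bar g}_1,\mathbf{\bar g}_2))\cap Z(\mathbf{\bar g}_2)$. Step~3 is the same appeal to Lemma~\ref{lifting complete level}.

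Your Step~2 differs from the paper, and in a good way: you use $\delta_j\mathfrak m=0$ to reduce the level-$(j+1)$ equation to $\overline{De}_{m+1}(\bar X,\bar Y)=\bar{\mathcal D}$ over $k$. The paper instead uses a Nakayama step asserting $\im(De^{\mathcal O_2}_{m+1})=\mathfrak{sl}_2(\mathcal O_2)$. That cannot hold literally once $\mathfrak m_2\neq 0$, because the derivative only takes trace-zero values while $\mathfrak{sl}_2(\mathcal O_2)$ only requires trace in $\mathfrak m_2$. So keep your direct argument rather than the ``alternative'' of copying that Nakayama step verbatim.
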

\begin{proof}
It is enough to show the surjectivity of the derivative map of $\bar e_{m+1}$ at $(\mathbf{\bar g}_1, \mathbf{\bar g}_2)$ at the Lie algebra level. At first, we work over $\mathcal O_2$. Take any lift $\mathbf{g}$ of $\bar e_{m+1}(\mathbf{\bar g}_1, \mathbf{\bar g}_2)$ in $\M_2(\mathcal{O}_2)$. Then (as explained before Lemma~\ref{lem-adelic 2}) there exists $\mathbf{g}_1, \mathbf{g}_2$, lifts of $\mathbf{\bar g}_1, \mathbf{\bar g}_2$ respectively, in $\M_2(\mathcal{O}_2)$ such that $\mathbf{g}= e_{m+1}^{\R}(\mathbf{g}_1, \mathbf{g}_2)+ \pi_2 \mathcal{D}$ for some $\mathcal D$ where $\tr(\mathcal{D}) \in \mathfrak{m}_2$. Recall that $\pi_2$ is the generator of $\mathfrak{m}_2$ with $\pi_2^2 = 0$. Let $X, Y \in \mathfrak{sl}_2(\mathcal{O}_2)$. When $m=1$, we have proved this in Lemma~\ref{lem-adelic 2}.

{\bf Step I:}  Along the line of proof in the previous Lemma, we look at the equation $\langle \overline{De}_{m+1}(X, Y), T \rangle = 0$ for $(m+1)$-th Engel word $ e_{m+1}^{\R}$ for a given $m\geq 2$, where $T\in \im(\overline{De}_{m+1})^{\perp}$; when we put $Y=\mathbf{0}$ and $X=\mathbf{0}$ separately. Let us list down the two equations for $De^{\R}_{m+1} \colon  \mathfrak{sl}_2(\mathcal{O}_2) \times \mathfrak{sl}_2(\mathcal{O}_2) \rightarrow \mathfrak{sl}_2(\mathcal{O}_2)$ defined by (see Equation (\ref{der 3})) 
$$  (X,Y)\mapsto \left( e_1^{\mathcal{O}_{2}}(De_m^{\mathcal{O}_{2}}(X,Y),\mathbf{g}_2)-e_1^{\mathcal{O}_{2}}(Y,e_m^{\mathcal{O}_{2}}(\mathbf{g}_1,\mathbf{g}_2))\right)$$  
as follows.

\textbf{Case I. When $ (Y=\mathbf{0})$:} In this case $\langle \overline{De}_{m+1}(X,Y), T \rangle = 0$.
That is, for all $X$, we have
\begin{eqnarray}{\label{eqnortho}}\nonumber 
0=\langle\overline{De}_{m+1}(X,\mathbf{0}),T \rangle &=& \left\langle\left( \bar e_1(\overline{De}_m(X,\mathbf{0}),\mathbf{\bar g}_2)-\bar e_1(\mathbf{0},\bar e_m(\mathbf{\bar g}_1,\mathbf{\bar g}_2))\right),T\right\rangle = \left\langle  \bar e_1(\overline{De}_m(X,\mathbf{0}),\mathbf{\bar g}_2),T\right\rangle \\
&=& \left\langle \overline{De}_m(X,\mathbf{0})\mathbf{\bar g}_2-\mathbf{\bar g}_2\overline{De}_m(X,\mathbf{0}),T\right\rangle = \left\langle \overline{De}_m(X,\mathbf{0}),U_1\right\rangle\ \ \ [\text{Where}\ U_1=[\mathbf{\bar g}_2,T]].
\end{eqnarray} 
Now,
\begin{eqnarray*}
\left\langle \overline{De}_m(X,\mathbf{0}),U_1\right\rangle&=& \left\langle\left( \bar e_1(\overline{De}_{m-1}(X,\mathbf{0}),\mathbf{\bar g}_2)-\bar e_1(\mathbf{0},\bar e_{m-1}(\mathbf{\bar g}_1,\mathbf{\bar g}_2))\right),T\right\rangle \\\nonumber
&=& \left\langle  \bar e_1(\overline{De}_{m-1}(X,\mathbf{0}),\mathbf{\bar g}_2),T\right\rangle = \left\langle \overline{De}_{m-1}(X,\mathbf{0})\mathbf{\bar g}_2-\mathbf{\bar g}_2\overline{De}_{m-1}(X,\mathbf{0}),T\right\rangle\\
&=& \left\langle \overline{De}_{m-1}(X,\mathbf{0}),U_2\right\rangle\ \ \ [\ \text{Where}\ U_2=[\mathbf{\bar g}_2,U_1]=[\mathbf{\bar g}_2,[\mathbf{\bar g}_2,T]]\ ]\\
& & \ \ \vdots\  \ \ \ \ \ \ \ \ \vdots\ \ \ \ \ \ \ \ \ \vdots\\
&=&  \left\langle \overline{De}_{1}(X,\mathbf{0}),U_m\right\rangle\ \ \ [\ \text{Where}\ U_m=[\mathbf{\bar g}_2,U_{m-1}]=[\underbrace{\mathbf{\bar g}_2[\cdots[\mathbf{\bar g}_2,[\mathbf{\bar g}_2}_{m\ \text{times}},T]]\cdots]\ ]\\
&=& \left\langle X,U_{m+1}\right\rangle \ \ \ [\ \text{Where}\ U_{m+1}=[\mathbf{\bar g}_2,U_{m}]\ \text{and}\ \overline{De}_{1}(X,\mathbf{0})=X\mathbf{\bar g}_2-\mathbf{\bar g}_2X\ ]
\end{eqnarray*}

Therefore the Equation~(\ref{eqnortho}) finally turns out to be: $$\left\langle X,U_{m+1}\right\rangle=0\ \ \forall X\in \mathfrak{sl}_2(k);\ \text{where}\ U_{m+1}=[\mathbf{\bar g}_2,U_{m}]=[\underbrace{\mathbf{\bar g}_2[\cdots[\mathbf{\bar g}_2,[\mathbf{\bar g}_2}_{m+1\ \text{times}},T]]\cdots].$$
The non-degeneracy of the bilinear form implies $U_{m+1}=\mathbf{0}$. Thus, we have the following properties of $U_m$: (1)  $U_{m+1}=\mathbf{0}$ implies $[\mathbf{\bar g}_2,U_{m}] = \mathbf{0}$ and hence $U_m\in Z(\mathbf{\bar g}_2)$. (2) As $U_m=[\mathbf{\bar g}_2,U_{m-1}]$ therefore $U_m\in Z(\mathbf{\bar g}_2)^{\perp}$.

These properties yield 
\begin{equation}{\label{eqnU_m}}
U_m\in Z(\mathbf{\bar g}_2)\cap Z(\mathbf{\bar g}_2)^{\perp}.
\end{equation}
However, by assumption $Z(\mathbf{\bar g}_2)\cap Z(\mathbf{\bar g}_2)^{\perp}=\{\mathbf{0\}}$, therefore, from Equation~(\ref{eqnU_m}) we obtain $U_m=\mathbf{0}$. Continuing this process, inductively we obtain $U_1=\mathbf{0}$, and hence 
\begin{equation}{\label{T in Z g2}}
    T\in Z(\mathbf{\bar g}_2).
\end{equation} 

\textbf{Case II. When $ (X=\mathbf{0})$ :} 
In this case $\langle T, \overline{De}_{m+1}(X,Y)\rangle = 0$. That is, for all $Y\in \mathfrak{sl}_2(k)$ we have 
\begin{eqnarray*}
0 &=& \left\langle \overline{De}_{m+1}(\mathbf{0}, Y),T \right\rangle =\langle  \bar e_1\left(\overline{De}_m(\mathbf{0},Y),\mathbf {\bar g}_2\right)-\bar e_1\left(Y,\bar e_m(\mathbf {\bar g}_1,\mathbf {\bar g}_2)\right),T \rangle \\
&=& \left\langle  \bar e_1\left(\overline{De}_m(\mathbf{0},Y),\mathbf {\bar g}_2\right),T\right\rangle-\left\langle \bar e_1\left(Y,\bar e_m(\mathbf {\bar g}_1,\mathbf {\bar g}_2)\right),T\right\rangle \\
&=& \left\langle \overline{De}_m(\mathbf{0},Y)\mathbf{\bar g}_2-\mathbf{\bar g}_2\overline{De}_m(\mathbf{0},Y),T\right\rangle-\left\langle Y\bar e_m(\mathbf {\bar g}_1,\mathbf {\bar g}_2)-\bar e_m(\mathbf {\bar g}_1,\mathbf {\bar g}_2)Y,T \right\rangle\\
&=& \left\langle \overline{De}_m(\mathbf{0},Y),\mathbf{\bar g}_2T-T\mathbf{\bar g}_2\right\rangle-\left\langle Y, \bar e_m(\mathbf {\bar g}_1,\mathbf {\bar g}_2)T-T\bar e_m(\mathbf {\bar g}_1,\mathbf {\bar g}_2)\right\rangle
= \left\langle\overline{De}_m(\mathbf{0},Y),U_1 \right\rangle-\left\langle Y, [\bar e_m(\mathbf {\bar g}_1,\mathbf {\bar g}_2),T]\right\rangle.
\end{eqnarray*}
This further implies 
\begin{equation}{\label{ortho3}}
\left\langle Y, [\bar e_m(\mathbf {\bar g}_1,\mathbf {\bar g}_2), T] \right\rangle = \left\langle \overline{De}_m(\mathbf{0}, Y), U_1 \right \rangle \ \ \forall \ Y\in \mathfrak{sl}_2(k).
\end{equation}

{\bf Step II:} Using the Equation~(\ref{eqnU_m}) we already have proved $U_1=\mathbf{0}$. Substituting this value in the Equation~(\ref{ortho3}), we obtain $\left\langle Y, [\bar e_m(\mathbf {\bar g}_1,\mathbf {\bar g}_2),T]\right\rangle=0 \ \ \forall \ Y\in \mathfrak{sl}_2(k).$ The non-degeneracy of the bilinear form implies 
\begin{equation}
{\label{complement}}
    T\in Z(\bar e_m(\mathbf {\bar g}_1,\mathbf {\bar g}_2)).
\end{equation}
Therefore Equation~(\ref{T in Z g2}) and 
Equation~(\ref{complement}) together gives $T\in Z(\mathbf{\bar g}_2)\cap Z(\bar e_m(\mathbf {\bar g}_1,\mathbf {\bar g}_2))$.

Now, by the assumption, we have $Z(\mathbf{\bar g}_2) \cap Z(\bar e_m(\mathbf {\bar g}_1,\mathbf {\bar g}_2))=\{\mathbf{0}\}$. This immediately implies $T = \mathbf{0}$ and hence $\im(\overline{De}_{m+1})^\perp \cap \mathfrak{sl}_2(k) = \{\mathbf{0}\}$. Therefore, the formula $dim(\im(\overline{De}_{m+1})) + dim(\im(\overline{De}_{m+1})^\perp) = dim (\mathfrak{sl}_2(k))$ implies $coker(\overline{De}_{m+1}) = \mathfrak{sl}_2(k)/ \im(\overline{De}_{m+1}) = \{\mathbf{0}\}$. Now consider the short exact sequence 
\[\begin{tikzcd}
	\mathbf{0} & {\im(De^{\R}_{m+1})} & {\mathfrak{sl}_2(\mathcal{O}_2)} & {\mathfrak{sl}_2(\mathcal{O}_2)/\im(De^{\R}_{m+1})} & \mathbf{0}.
	\arrow[from=1-1, to=1-2]
	\arrow[from=1-2, to=1-3]
	\arrow[from=1-3, to=1-4]
	\arrow[from=1-4, to=1-5]
\end{tikzcd}\]
Tensoring with $\mathcal{O}_2/\mathfrak{m}_2$, and using right exactness we obtain $$\mathfrak{sl}_2(\mathcal{O}_2)/\im(De^{\R}_{m+1})\bigotimes_{\mathcal{O}_2}\mathcal{O}_2/\mathfrak{m}_2\cong coker(\overline{De}_{m+1}) = \{\mathbf{0}\}.$$ 
Let us denote $\mathcal{M} = \mathfrak{sl}_2(\mathcal{O}_2)/ \im(De^{\R}_{m+1})$ which is an $\mathcal{O}_2$ module. Therefore $\mathcal{M}/ \mathfrak{m}_2\mathcal{M} = \{\mathbf{0}\}$. Invoking \cite[Proposition 2.6]{Atiyahmacdonald} we obtain $\im(De^{\R}_{m+1}) = \mathfrak{sl}_2(\mathcal{O}_2)$.

{\bf Step III:} Therefore, if $A\in \M_2(\mathcal{O})$ such that $\tr(A)=0$, is a lift of $\bar e_{m+1}(\mathbf{\bar g}_1, \mathbf{\bar g}_2)$ then there exists $(\mathbf{h}_1, \mathbf{h}_2)\in \M_2(\mathcal{O}_2)^2$ such that $\mathbf{h}_i$ are lifts of corresponding $\mathbf{\bar g}_i$ for $i=1,2$ and  $e_{m+1}(\mathbf{h}_1, \mathbf{h}_2) = A_2$. The kernel of the reduction map $\M_2(\mathcal{O}_{j+1}) \rightarrow \M_2(\mathcal{O}_j)$ is $\M_2(\mathfrak{m}_{j+1}^j)$ where $\mathfrak{m}_{j+1}^j=(\delta_j)$; $\delta_j$ is the generator of $ker(\theta_j)$. Note that $(ker(\theta_j))^2=0$ implies $\delta_j^2=0$. Therefore in the context of lifting of elements from $\im(e_{m+1}^{\mathcal{O}_j})$ to $\im(e_{m+1}^{\mathcal{O}_{j+1}})$, one can observe $e_{m+1}^{\mathcal{O}_{j+1}}(B_1 + \delta_jX, B_2 + \delta_jY) = e_{m+1}^{\mathcal{O}_{j+1}}(B_1, B_2) + \delta_j De_{m+1}^{\mathcal{O}_{j+1}}(X, Y)$;  because:
\begin{eqnarray*}
&& e_{m+1}^{\mathcal{O}_{j+1}}(B_1 + \delta_jX, B_2 + \delta_jY) =  e_1^{\mathcal{O}_{j+1}}(e_m^{\mathcal{O}_{j+1}}(B_1 + \delta_jX, B_2+\delta_jY), B_2 + \delta_jY)\\
&=& e_1^{\mathcal{O}_{j+1}}(e_m^{\mathcal{O}_{j+1}}(B_1, B_2) + \delta_j De_m^{\mathcal{O}_{j+1}}(X, Y), B_2 + \delta_jY)\\
& =& (e_m^{\mathcal{O}_{j+1}}(B_1, B_2) + \delta_j De_m^{\mathcal{O}_{j+1}}(X, Y))(B_2 + \delta_jY)-(B_2 + \delta_jY)(e_m^{\mathcal{O}_{j+1}}(B_1, B_2) + \delta_j De_m^{\mathcal{O}_{j+1}}(X, Y))\\
&=&   (e_m^{\mathcal{O}_{j+1}}(g_1, B_2)B_2-B_2 e_m^{\mathcal{O}_{j+1}}(B_1, B_2)) + \delta_j(e_m^{\mathcal{O}_{j+1}}(B_1, B_2)Y + De_m^{\mathcal{O}_{j+1}}(X, Y)B_2 - B_2 De_m^{\mathcal{O}_{j+1}}(X, Y)  \\ && 
-Y e_m^{\mathcal{O}_{j+1}}(B_1, B_2))\\
& =& e_{m+1}^{\mathcal{O}_{j+1}}(B_1, B_2) + \delta_j\left(e_1^{\mathcal{O}_{j+1}}(De_m^{\mathcal{O}_{j+1}}(X, Y), B_2)-e_1^{\mathcal{O}_{j+1}}(Y, e_m^{\mathcal{O}_{j+1}}(B_1, B_2)) \right)\\
&=& e_{m+1}^{\mathcal{O}_{j+1}}(B_1, B_2) + \delta_j De_{m+1}^{\mathcal{O}_{j+1}}(X, Y)
\end{eqnarray*}
where $B_1, B_2\in \M_2(\mathcal{O}_{j+1})$ are the lifts of $\mathbf{\bar g}_1$ and $\mathbf{\bar g}_2$ respectively (achieved through successive lifting of $\mathbf{\bar g}_1$ and $\mathbf{\bar g}_2$ at each $\mathcal{O}_{\ell}$ level for $\ell \leq j+1$), moreover $X, Y\in \mathfrak{sl}_2(\mathcal{O}_{j+1})$ and $De_{m+1}^{\mathcal{O}_{j+1}}\colon  \mathfrak{sl}_2(\mathcal{O}_{j+1}) \times \mathfrak{sl}_2(\mathcal{O}_{j+1}) \rightarrow \mathfrak{sl}_2( \mathcal{O}_{j+1}) $ is the derivative map corresponding to $e_{m+1}$ at $(B_1, B_2)$ in $j+1$-th level defined earlier.

As $\mathfrak{sl}_2(\mathcal{O}_{j+1})/ \im(De^{\mathcal{O}_{j+1}}_{m+1})\bigotimes_{\mathcal{O}_{j+1}}\mathcal{O}_{j+1}/\mathfrak{m}_{j+1}\cong coker(\overline{De}_{m+1}) = \{\mathbf{0}\}$, therefore Nakayama's lemma \cite{Atiyahmacdonald} guarantees that all these $De_{m+1}^{\mathcal{O}_{j+1}}$ are surjective for $j\geq 1$. Therefore there exists $(P_{j+1}, Q_{j+1})\in \M_2(\mathcal{O}_{j+1})^2$ which is the lift of corresponding $(P_j, Q_j)\in \M_2(\mathcal{O}_j)^2$ such that $A_{j+1} = e_{m+1}^{\mathcal{O}_{j+1}}(P_{j+1}, Q_{j+1})$ for each $j\geq 1$; where $(P_1, Q_1) = (\mathbf{\bar g}_1, \mathbf{\bar g}_2)$ and $(P_2, Q_2)=(\mathbf{h}_1, \mathbf{h}_2)$. Applying Lemma~\ref{lifting complete level} yields the required result. 
\end{proof}

%% file: fibers.tex
\section{Fibers of cyclic elements in $\mathfrak{sl}_2(k)$ with respect to the Engel map}

In this section, we investigate the properties of fibers of the cyclic elements in $\mathfrak{sl}_2(k)$ with respect to the map $\bar e_m$. For any $g\in \mathfrak{sl}_2(k)$ and $m\geq 1$ we define the set:
$$E_{m}(g) = \left\{(x, y)\in \M_2(k)\times \M_2(k)\ \mid\ \bar e_m(x, y)=g\right\}.$$
The set $E_m(g)$ is the fiber set of $g$ corresponding to the $m$-th Engel map. We start with the case when $g$ is a cyclic nilpotent element in $\mathfrak{sl}_2(k)$. We assume that $char(k)\neq 2$ throughout the section.

\begin{proposition}{\label{nilpotentengel}}
Let $g=\left(\begin{array}{cc} 0 & 1 \\ 0 & 0 \end{array} \right) \in \M_2(k)$. Then there exists $(\alpha_1, \alpha_2)\in E_m(g)$ such that $\alpha_1$ is nilpotent and $\alpha_2$ is regular semisimple.
\end{proposition}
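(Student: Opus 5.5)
We construct the pair explicitly. Take $\alpha_2$ diagonal, $\alpha_2=\mathrm{diag}(a,-a)$ with $a\in k^{\times}$. Since $\mathrm{char}(k)\neq 2$, we have $a\neq -a$, so $\alpha_2$ has distinct eigenvalues in $k$ and is regular semisimple. Take $\alpha_1=\left(\begin{array}{cc} 0 & c \\ 0 & 0 \end{array}\right)$ with $c\in k$; this is nilpotent since $\alpha_1^2=\mathbf{0}$.

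The key step is to compute the iterated bracket with $\alpha_2$ on the root vector $E_{12}=\left(\begin{array}{cc} 0 & 1 \\ 0 & 0 \end{array}\right)$. One has
$$[E_{12},\alpha_2]=E_{12}\alpha_2-\alpha_2E_{12}=(-a)E_{12}-aE_{12}=-2aE_{12}.$$
So $E_{12}$ is an eigenvector of $\mathrm{ad}$ on the right by $\alpha_2$ with eigenvalue $-2a$. By induction on $m$, using the recursion $\bar e_{m}(x,y)=[\bar e_{m-1}(x,y),y]$ with $\bar e_0(x,y)=x$,
$$\bar e_m(cE_{12},\alpha_2)=(-2a)^m c\,E_{12}.$$

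It remains to solve $(-2a)^m c=1$. We take $a=1$, which is regular semisimple as noted above, and $c=(-2)^{-m}$, which exists because $2$ is invertible in $k$. Then $\bar e_m(\alpha_1,\alpha_2)=g$, so $(\alpha_1,\alpha_2)\in E_m(g)$ with $\alpha_1$ nilpotent and $\alpha_2$ regular semisimple.

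There is no serious obstacle. The only care needed is the sign convention in the bracket (the right action $[x,y]$ rather than $[y,x]$) and ensuring that every scalar used, namely $2$ and its powers, is invertible. This is exactly where the hypothesis $\mathrm{char}(k)\neq 2$ enters, both for $\alpha_2$ to be regular semisimple and for $c$ to exist. If the paper also wants $\alpha_1,\alpha_2\in\mathfrak{sl}_2(k)$, for later use with Proposition~\ref{adelic 2}, both chosen matrices already have trace zero.
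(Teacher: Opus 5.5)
Your proof is correct and is essentially the paper's argument. The paper also takes $h_1=\left(\begin{array}{cc} 0 & 1 \\ 0 & 0 \end{array}\right)$ and $h_2=\mathrm{diag}(1,-1)$, shows $\bar e_m(h_1,h_2)=(-2)^m h_1$ by induction, and then conjugates by a suitable $P\in\GL_2(k)$ to reach $g$; you get the same normalization more explicitly by rescaling $\alpha_1$ by $(-2)^{-m}$ using linearity in the first argument, which is what conjugation by a diagonal $P$ amounts to here.
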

\begin{proof} Consider $h_1 = \left(\begin{array}{cc}
0 & 1 \\ 0 & 0 \end{array}\right)$ and $h_2 = \left(\begin{array}{cc} 1 & 0 \\  0 & -1 \end{array} \right)$ in $\M_2(k)$.  Now, we claim that 
$$\bar e_m(h_1,h_2)=\left(\begin{array}{cc} 0 & (-2)^m \\ 0 & 0 \end{array}\right)$$ for any $m\in \mathbb{N}$. We prove this by the following induction process:

\textbf{Base case :} It is easy to see that $\bar e_1(h_1, h_2) = h_1h_2 - h_2h_1 = \left(\begin{array}{cc} 0 & -2 \\
0 & 0 \end{array}\right)$. Hence, the claim is true for $m=1$.

\textbf{Induction Hypothesis :} Let the statement is true for $m=2,3,\ldots,r$.

\textbf{Inductive step :} By the induction hypothesis, we have 
$$\bar e_r(h_1,h_2)=\left(\begin{array}{cc} 0 & (-2)^r \\ 0 & 0 \end{array}\right).$$

Now, \begin{eqnarray*}
    \bar e_{r+1}(h_1, h_2) &=& [\bar e_r(h_1,h_2),h_2] =  \left(\begin{array}{cc} 0 & (-2)^r \\ 0 & 0 \end{array}\right) \left(\begin{array}{cc} 1 & 0 \\ 0 & -1 \end{array}\right) - \left(\begin{array}{cc} 1 & 0 \\ 0 & -1 \end{array}\right) \left(\begin{array}{cc}
0 & (-2)^r \\ 0 & 0 \end{array}\right)\\
 &=&   \left(\begin{array}{cc} 0 & (-2)^{r+1} \\ 0 & 0 \end{array}\right).
\end{eqnarray*} 
Therefore, the claim is true for any $m\in \mathbb{N}$.

It is easy to observe that there exists $P\in \GL_2(k)$ such that $P\bar e_m(h_1, h_2) P^{-1} = \left(\begin{array}{cc} 0 & 1 \\ 0 & 0 \end{array} \right)=g$. Consider $\alpha_i=Ph_iP^{-1}$ for $i=1,2$ and hence $\bar e_m(\alpha_1,\alpha_2)=g$. 

     Observe that $\alpha_1^2=\mathbf{0}$ as $h_1^2=\mathbf{0}$, and $h_2$ has distinct eigenvalues, hence $\alpha_2$ is regular semisimple.
\end{proof}
\begin{remark}
The elements obtained in the proof above show that there exist $\alpha_1$ and $\alpha_2$ in  $\mathfrak{sl}_2(k)$ such that $\bar e_m(\alpha_1, \alpha_2) = \left(\begin{array}{cc} 0 & 1 \\ 0 & 0
\end{array} \right)$.
\end{remark}

\begin{lemma}{\label{fiber 1}}
Let $g\in \mathfrak{sl}_2(k)$ be a cyclic element. Then, for any $(h_1, h_2)\in E_1(g)$, $Z(h_1)\cap Z(h_2)=\{\mathbf{0}\}$.
\end{lemma}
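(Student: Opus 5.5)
The plan is to argue by contradiction, using that a non-scalar $2\times 2$ matrix has a centralizer in $\M_2(k)$ that is as small as possible. Let $g\in\mathfrak{sl}_2(k)$ be cyclic and $(h_1,h_2)\in E_1(g)$, so that $[h_1,h_2]=g$ with $h_1,h_2\in\M_2(k)$. Note that $h_1,h_2$ need not have trace zero. Suppose there is a nonzero $T\in Z(h_1)\cap Z(h_2)$, i.e.\ $T\in\mathfrak{sl}_2(k)$, $T\neq\mathbf{0}$, with $Th_i=h_iT$ for $i=1,2$.

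\textbf{Step 1: $T$ is cyclic.} $T$ is not scalar. Indeed, if $T=cI$, then $\tr(T)=2c=0$, and since $\mathrm{char}(k)\neq 2$ this forces $c=0$, contradicting $T\neq\mathbf{0}$. A non-scalar $2\times 2$ matrix has minimal polynomial of degree $2$, equal to its characteristic polynomial, so $T$ is cyclic.

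\textbf{Step 2: describe $Z_{\M_2(k)}(T)$.} Since $T$ is cyclic, the standard fact gives $Z_{\M_2(k)}(T)=k[T]$. Because $\deg\chi_T=2$, this is $k[T]=\mathrm{span}_k\{I,T\}$. For a self-contained argument, pick $v\in k^2$ with $\{v,Tv\}$ a basis. Any $S$ commuting with $T$ is then determined by $Sv=aI v+bTv$, which gives $S=aI+bT$.

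\textbf{Step 3: conclude.} By Step 2, $h_1=a_1I+b_1T$ and $h_2=a_2I+b_2T$ for some scalars $a_i,b_i\in k$. Hence $h_1$ and $h_2$ commute, so $g=[h_1,h_2]=\mathbf{0}$. But the zero matrix is scalar and hence not cyclic, which contradicts the hypothesis on $g$. Therefore $Z(h_1)\cap Z(h_2)=\{\mathbf{0}\}$.

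The only point that needs care is Step 1: passing from $T\neq\mathbf{0}$ with $\tr(T)=0$ to $T$ being non-scalar. This is exactly where the assumption $\mathrm{char}(k)\neq 2$ enters. The rest is the elementary description of centralizers of cyclic $2\times 2$ matrices, so I expect no serious obstacle.
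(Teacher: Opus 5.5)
Your proof is correct and is essentially the paper's argument with the roles swapped: both rest on the fact that the centralizer of a non-scalar $2\times 2$ matrix $X$ is $\mathrm{span}_k\{I,X\}$, together with $g\neq\mathbf{0}$ and the trace-zero condition in characteristic $\neq 2$. The paper applies this fact to $h_1$, writing $Q=ah_1+bI$, then gets $a=0$ from $a[h_1,h_2]=ag=\mathbf{0}$ and $b=0$ from $\tr(Q)=0$ (where $\mathrm{char}(k)\neq 2$ is used only implicitly); you instead apply it to $T$, conclude that $h_1,h_2\in k[T]$ commute, and you make the use of $\mathrm{char}(k)\neq 2$ explicit.
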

\begin{proof}
Let, $g\in \mathfrak{sl}_2(k)$ be cyclic and $h_1, h_2\in \M_2(k)$ be such that $\bar e_1(h_1, h_2)=g$. Let $Q\in Z(h_1)\cap Z(h_2)$ in $\mathfrak{sl}_2(k)$. As $g$ is cyclic, $h_1, h_2$ must be non-scalar, hence cyclic. Now, $Q\in Z(h_1)$ implies $Q= f(h_1)$ for some $f(t)\in k[t]$. By division algorithm, there exist $s(t)$ and $r(t)$ in $k[t]$ such that $$ f(t)=s(t)\chi_{h_1}(t) + r(t)$$ where either $r(t)=0$ or $\deg(r(t)) < \deg(\chi_{h_1}(t)) = 2$. 
    
Let $\deg(r(t)) = 1$ and $r(t) = at + b$ for some $a, b\in k$. Thus, $Q= f(h_1) = ah_1 + bI$. Now, $Q\in Z(h_2)$ implies $Qh_2=h_2Q$. Hence, we obtain
$Qh_2= (ah_1 + bI)h_2 = h_2Q = h_2(ah_1 + bI)$ which gives $a[h_1,h_2]=\mathbf{0}$. 
Hence, $a g = \mathbf{0}$ which means $a=0$ as $g\neq \mathbf{0}$. Substituting $a=0$ we get $Q=bI$. Now, $\tr(Q)=0$ thus $b=0$, consequently $Q=\mathbf{0}$. 
In the case, $r(t)=0$, clearly $Q=\mathbf{0}$. This proves the lemma.
\end{proof}
\begin{lemma}{\label{fiberortho}}
Let $h\in M_2(k)$ be any regular semisimple element. Then $Z(h)\cap Z(h)^{\perp}=\{\mathbf{0}\}$ in $\mathfrak{sl}_2(k)$.
\end{lemma}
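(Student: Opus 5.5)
We need to prove: for $h$ regular semisimple in $\M_2(k)$, $Z(h)\cap Z(h)^{\perp}=\{\mathbf{0}\}$ in $\mathfrak{sl}_2(k)$, where $Z(h)$ is the trace-zero centralizer and $\perp$ is taken with respect to $\langle P,Q\rangle=\tr(PQ)$.

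The plan is to compute $Z(h)$ explicitly and then test the trace form on it.

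\textbf{Step 1: $Z(h)$ is one-dimensional.} Since $h$ is regular semisimple, it has distinct eigenvalues, so it is non-scalar and hence cyclic. As in the proof of Lemma~\ref{fiber 1}, every element of $Z_{\M_2(k)}(h)$ is then a polynomial in $h$ of degree at most $1$, i.e.\ of the form $ah+bI$. The trace-zero condition gives $2b=-a\,\tr(h)$. Since $\operatorname{char}(k)\neq 2$, we get $b=-a\,\tr(h)/2$. Therefore
$$Z(h)=k\cdot h_0,\qquad h_0:=h-\tfrac{\tr(h)}{2}I .$$

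\textbf{Step 2: reduce to one value of the form.} Take $Q\in Z(h)\cap Z(h)^{\perp}$ and write $Q=ah_0$. Then
$$0=\langle Q,h_0\rangle=a\,\tr(h_0^2).$$
So it suffices to show $\tr(h_0^2)\neq 0$; this forces $a=0$ and hence $Q=\mathbf{0}$.

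\textbf{Step 3: $\tr(h_0^2)\neq 0$ (the one substantive point).} Let $\lambda_1,\lambda_2\in\bar k$ be the eigenvalues of $h$. The eigenvalues of $h_0$ are $\pm(\lambda_1-\lambda_2)/2$, so
$$\tr(h_0^2)=\frac{(\lambda_1-\lambda_2)^2}{2}=\frac{\tr(h)^2-4\det(h)}{2}.$$
This is half the discriminant of $\chi_h$. It is nonzero exactly because $\lambda_1\neq\lambda_2$, which is the regular semisimple hypothesis.

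The only points requiring care are the two uses of $\operatorname{char}(k)\neq 2$: dividing by $2$ when normalizing $h_0$ in Step 1, and in the discriminant formula in Step 3. The hypothesis is genuinely needed: for nilpotent $h$ the conclusion fails, since then $h\in Z(h)$ and $\tr(h^2)=0$.
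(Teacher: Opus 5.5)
Your proof is correct, but it takes a somewhat different route from the paper's. The paper extends scalars to $\bar k$ and conjugates $h$ to $\widehat h=\mathrm{diag}(c,d)$ with $c\neq d$. It then reads off $Z(\widehat h)=\mathrm{span}\{\mathrm{diag}(1,-1)\}$ and $Z(\widehat h)^{\perp}$ as the span of the two off-diagonal elementary matrices, notes that these meet trivially, and pulls the conclusion back to $k$ via conjugation invariance of the trace form. You instead never leave $k$ for the linear algebra: cyclicity gives $Z(h)=k\,h_0$, so the whole question collapses to the single scalar $\tr(h_0^2)=\tfrac12(\tr(h)^2-4\det h)$, half the discriminant of $\chi_h$. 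The two computations are the same in different coordinates: if $\Lambda h\Lambda^{-1}=\widehat h$ then $\Lambda h_0\Lambda^{-1}=\tfrac{c-d}{2}\,\mathrm{diag}(1,-1)$, so $\tr(h_0^2)=\tfrac{(c-d)^2}{2}$. Your version buys two things. First, it avoids transporting orthogonal complements between $k$ and $\bar k$; the paper does this by testing orthogonality only against $\widehat h$, which implicitly relies on the trace-zero part of $\widehat h$ spanning $Z(\widehat h)$. Second, it isolates the exact obstruction, making visible where $\operatorname{char}(k)\neq 2$ enters and why the statement fails for nonzero nilpotent $h$. The paper's diagonal picture, in turn, describes $Z(\widehat h)^{\perp}$ completely and does not need the fact that the centralizer of a cyclic matrix consists of polynomials in it.
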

\begin{proof}
Let $Q\in Z(h)\cap Z(h)^{\perp}$ in $\M_2(k)$. As $h$ is regular semisimple, there exists $\Lambda\in \GL_2(\bar k)$ such that $\Lambda h \Lambda^{-1} = \widehat{h} = \left(\begin{array}{cc} c & 0 \\ 0 & d
\end{array}\right)$ for some $c, d \in \bar k$ with $c\neq d$. Therefore, $Z(\widehat{h}) = span \left(\begin{array}{cc} 1 & 0 \\ 0 & -1 \end{array} \right)$. It is easy to see, that $Z(\widehat{h})^{\perp} = span\left \{\left(\begin{array}{cc} 0 & 1 \\ 0 & 0 \end{array} \right), \left(\begin{array}{cc} 0 & 0 \\ 1 & 0 \end{array} \right) \right\}$. Hence, $Z(\widehat{h})\cap Z(\widehat{h})^{\perp}=\mathbf{0}$ in $\mathfrak{sl}_2(\bar k)$.

Now $\Lambda Q\Lambda^{-1}\in Z(\widehat{h})$. Moreover, by the conjugacy invariance of the bilinear form (over $\bar k)$ we obtain $\langle Q, h\rangle = 0$ implies $\langle \Lambda Q \Lambda^{-1}, \widehat{h} \rangle = 0$. Hence $\Lambda Q \Lambda^{-1} \in Z(\widehat{h}) \cap Z(\widehat{h})^{\perp}$. Hence, $\Lambda Q \Lambda^{-1} = \mathbf{0}$. This further implies $Q=\mathbf{0}$. Hence the lemma.
\end{proof}

\begin{proposition}{\label{cyclicnilpotentcentral}}
Let $\mathbf{g}$ be a cyclic but not regular semisimple element in $\mathfrak{sl}_2(k)$. Then any lift $A$ of $\mathbf{g}$ in $\mathfrak{sl}_2^{\circ}(\mathcal{O})$ is in $\im(e_{m+1}^{\mathcal{O}})$ for any $m\geq 1$.
\end{proposition}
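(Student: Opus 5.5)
Since $\mathbf{g}$ is a cyclic element of $\mathfrak{sl}_2(k)$ that is not regular semisimple, its characteristic polynomial $t^2-\det(\mathbf{g})$ has a repeated root. As $\tr(\mathbf{g})=0$ and $char(k)\neq 2$, that root must be $0$, so $\mathbf{g}$ is nilpotent. Being cyclic, it is nonzero, so $\mathbf{g}$ is $\GL_2(k)$-conjugate to $g=\left(\begin{array}{cc} 0 & 1 \\ 0 & 0 \end{array}\right)$, say $\mathbf{g}=Pg P^{-1}$. The plan is to reduce to Proposition~\ref{adelic 2}: we produce a pair $(\mathbf{\bar g}_1,\mathbf{\bar g}_2)$ with $\bar e_{m+1}(\mathbf{\bar g}_1,\mathbf{\bar g}_2)=\mathbf{g}$ that satisfies its two hypotheses. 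Proposition~\ref{adelic 2} then gives, for any lift $A\in\mathfrak{sl}_2^{\circ}(\mathcal{O})$ of $\mathbf{g}$, lifts $\mathbf{h}_1,\mathbf{h}_2$ with $A=e_{m+1}^{\mathcal{O}}(\mathbf{h}_1,\mathbf{h}_2)$.

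\textbf{Choice of the pair.} Apply Proposition~\ref{nilpotentengel} with index $m+1$ and conjugate by $P$. Concretely, take $h_1=g$ and $h_2=\mathrm{diag}(1,-1)$, so that $\bar e_{m+1}(h_1,h_2)=(-2)^{m+1}g$. Since $(-2)^{m+1}\in k^\times$, we can rescale: set $\mathbf{\bar g}_1=(-2)^{-(m+1)}Ph_1P^{-1}$ and $\mathbf{\bar g}_2=Ph_2P^{-1}$. Because $\bar e_{m+1}$ is linear in its first argument, this gives $\bar e_{m+1}(\mathbf{\bar g}_1,\mathbf{\bar g}_2)=\mathbf{g}$. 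Moreover $\mathbf{\bar g}_2$ is regular semisimple, with eigenvalues $\pm1$, which are distinct because $char(k)\neq 2$.

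\textbf{Verifying the hypotheses.} First, Lemma~\ref{fiberortho} applied to $\mathbf{\bar g}_2$ gives $Z(\mathbf{\bar g}_2)\cap Z(\mathbf{\bar g}_2)^{\perp}=\{\mathbf{0}\}$. Second, we need $Z(\bar e_m(\mathbf{\bar g}_1,\mathbf{\bar g}_2))\cap Z(\mathbf{\bar g}_2)=\{\mathbf{0}\}$. By the computation in Proposition~\ref{nilpotentengel}, $\bar e_m(\mathbf{\bar g}_1,\mathbf{\bar g}_2)$ is a nonzero multiple of $PgP^{-1}$. (For $m=0$ we would have $\bar e_0=\mathbf{\bar g}_1$, but here $m\ge1$.) Set $c=\bar e_m(\mathbf{\bar g}_1,\mathbf{\bar g}_2)$. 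Then $[c,\mathbf{\bar g}_2]=\bar e_{m+1}(\mathbf{\bar g}_1,\mathbf{\bar g}_2)=\mathbf{g}$, which is cyclic, so Lemma~\ref{fiber 1} applied to the pair $(c,\mathbf{\bar g}_2)\in E_1(\mathbf{g})$ gives $Z(c)\cap Z(\mathbf{\bar g}_2)=\{\mathbf{0}\}$. Alternatively, one can check this directly after conjugating: $Z(h_2)\cap\mathfrak{sl}_2(k)$ is spanned by $h_2$, $Z(g)\cap\mathfrak{sl}_2(k)$ is spanned by $g$, and these two lines meet only in $\mathbf{0}$.

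\textbf{Conclusion and the delicate point.} With both hypotheses verified, Proposition~\ref{adelic 2} applies to every lift $A\in\mathfrak{sl}_2^{\circ}(\mathcal{O})$ of $\mathbf{g}$, and so $A\in\im(e_{m+1}^{\mathcal{O}})$. The main step to handle carefully is the second hypothesis, since it concerns $\bar e_m$ rather than $\bar e_{m+1}$. Recognizing that $\bar e_{m+1}=[\bar e_m,\mathbf{\bar g}_2]$ is what lets Lemma~\ref{fiber 1} apply. The remaining points are routine: the rescaling by $(-2)^{-(m+1)}$, and the initial observation that a cyclic, non-regular-semisimple trace-zero matrix is nilpotent, which uses $char(k)\neq2$.
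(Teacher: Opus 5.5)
Your proposal is correct and takes essentially the same route as the paper. Both arguments realize $\mathbf{g}$ through the pair from Proposition~\ref{nilpotentengel}, in which $\mathbf{\bar g}_2$ is regular semisimple. Both then verify the two hypotheses of Proposition~\ref{adelic 2}: Lemma~\ref{fiber 1} is applied to $[\bar e_m(\mathbf{\bar g}_1,\mathbf{\bar g}_2),\mathbf{\bar g}_2]=\mathbf{g}$, and Lemma~\ref{fiberortho} is applied to $\mathbf{\bar g}_2$.

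Your explicit rescaling by $(-2)^{-(m+1)}$ and conjugation by $P$ make the pair map exactly to $\mathbf{g}$ rather than to its nilpotent normal form. This makes precise a step the paper leaves implicit.
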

\begin{proof}
As $g$ is cyclic but not regular semisimple in $\mathfrak{sl}_2(k)$, therefore $g$ is $k$-conjugate to $\left(\begin{array}{cc} 0 & 1 \\ 0 & 0 \end{array} \right)$. Now, by proposition~\ref{nilpotentengel} there exist $\beta_1,\beta_2\in \M_2(k)$ such that $$\left(\begin{array}{cc} 0 & 1 \\ 0 & 0  \end{array} \right) = \bar e_{m+1}(\beta_1, \beta_2) = [\bar e_m(\beta_1, \beta_2), \beta_2]$$ where $\beta_2$ is regular semisimple. However, $\left(\begin{array}{cc}
0 & 1 \\ 0 & 0 \end{array}\right)$ is cyclic therefore Lemma~\ref{fiber 1} implies that $Z(\bar e_m(\beta_1, \beta_2)) \cap Z(\beta_2) = \{\mathbf{0}\}$. Moreover, Lemma~\ref{fiberortho} ensures that $Z(\beta_2)\cap Z(\beta_2)^{\perp} = \{\mathbf{0}\}$ in $\mathfrak{sl}_2(k)$. Hence, invoking Proposition~\ref{adelic 2}, the result follows.
\end{proof}
\noindent We now restrict our attention to the case where $g$ is a regular semisimple element in $\mathfrak{sl}_2(k)$.

\begin{proposition}{\label{lem h2 regular semisimp}}
Let $g\in \mathfrak{sl}_2(k)$ be a regular semisimple element such that $g = \bar e_{m+1}(h_1, h_2)$ for some $h_1, h_2 \in \M_2(k)$ where $m\geq 1$. Then, $h_2$ must be a regular semisimple element.
 \end{proposition}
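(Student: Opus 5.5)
We argue by contradiction: assume $h_2$ is not regular semisimple and show that $g=\bar e_{m+1}(h_1,h_2)$ is then nilpotent, which is impossible for a regular semisimple $g\in\mathfrak{sl}_2(k)$. A nonzero trace-zero $2\times 2$ matrix with $\det(g)=0$ has a repeated root of $\chi_g$, so a regular semisimple element of $\mathfrak{sl}_2(k)$ cannot be nilpotent. This holds because $\mathrm{char}(k)\neq 2$, so $\chi_g(t)=t^2+\det g$ has distinct roots exactly when $\det g\neq 0$.

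First, reduce to a normal form for $h_2$. Replacing $h_2$ by $h_2-\tfrac{\tr(h_2)}{2}I$ does not change any commutator, so $\bar e_{m+1}(h_1,h_2)$ is unchanged; likewise, by Remark~\ref{scalarmatrixderivativemap}-type reasoning, scalar parts of $h_2$ play no role. We may therefore assume $\tr(h_2)=0$. The assumption that $h_2$ is not regular semisimple then means $\chi_{h_2}(t)=t^2+\det(h_2)$ has a repeated root. Again using $\mathrm{char}(k)\neq 2$, this forces $\det(h_2)=0$, so $h_2^2=\mathbf{0}$ and $h_2$ is nilpotent. If $h_2=\mathbf{0}$ then $g=\mathbf{0}$, a contradiction. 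Otherwise, conjugating by some $P\in\GL_2(k)$ (which preserves both the Engel map and regular semisimplicity of $g$), we may take $h_2=\left(\begin{array}{cc}0&1\\0&0\end{array}\right)$.

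Next, compute with $\mathrm{ad}_{h_2}$. Write $x=\bar e_m(h_1,h_2)=[\bar e_{m-1}(h_1,h_2),h_2]$; since $m\geq 1$, $x$ lies in the image of $\mathrm{ad}(h_2)\colon z\mapsto [z,h_2]$. A direct computation shows that for $z=\left(\begin{array}{cc}a&b\\c&d\end{array}\right)$ we have $[z,h_2]=\left(\begin{array}{cc}-c&a-d\\0&c\end{array}\right)$. Applying this a second time to such an upper triangular matrix gives $\left(\begin{array}{cc}0&-2c\\0&0\end{array}\right)$. Hence $g=[x,h_2]=\mathrm{ad}(h_2)^2(\bar e_{m-1}(h_1,h_2))$ is strictly upper triangular, so $g$ is nilpotent. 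This is the desired contradiction. Equivalently, $\mathrm{ad}(h_2)^2$ has image contained in $k h_2$ and $\mathrm{ad}(h_2)^3=0$.

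The only delicate point is the reduction in the first step: removing the trace of $h_2$ and identifying "not regular semisimple" with "scalar plus nilpotent". Both rely on $\mathrm{char}(k)\neq 2$, and on the fact that $m+1\geq 2$ commutators with $h_2$ are applied. With a single commutator ($m=0$) the conclusion would fail, so the use of $m\geq 1$ must be made explicit.
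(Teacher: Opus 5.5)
Your proof is correct and follows essentially the same route as the paper: assume $h_2$ is not regular semisimple, discard its central scalar part, conjugate the remaining nonzero nilpotent part to $N_1=\left(\begin{array}{cc}0&1\\0&0\end{array}\right)$, and show that two commutators with $N_1$ always land in $kN_1$, which would make $g$ nilpotent. The difference is cosmetic: you compute $\mathrm{ad}(N_1)^2$ on a general matrix directly, whereas the paper removes the trace of $h_1$, expands it in the basis $\Delta, N_1, N_2$, and treats $m=1$ and $m\geq 2$ separately; your appeal to Remark~\ref{scalarmatrixderivativemap} is unnecessary, since only $[z,cI]=\mathbf{0}$ is used.
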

 \begin{proof}
As $g$ is regular semisimple, $h_1, h_2$ cannot be scalar matrices. On the contrary, suppose $h_2$ is not regular semisimple. Then there exist $J\in \GL_2(k)$ such that $Jh_2J^{-1} = \left(\begin{array}{cc} \lambda & 1 \\ 0 & \lambda \end{array} \right) = \beta_2$, say, for some $\lambda\in k$. Let $\beta_1 = Jh_1J^{-1}$, $\widehat{g} = JgJ^{-1}$ and write $\beta_2=\lambda I + N_1$ where $N_1=\left(\begin{array}{cc} 0 & 1 \\ 0 & 0    \end{array}\right)$. Therefore $\widehat{g} = \bar e_{m+1}(\beta_1, \beta_2)$.

Now we claim that $$\bar e_{m+1}(\beta_1,\beta_2)=\bar e_{m+1}(\beta_1,N_1)$$ for any $m\in\mathbb{N}$. We prove this claim by the following induction process:

\textbf{Base case :} We observe that $[\beta_1, \beta_2] = [\beta_1, \lambda I + N_1] = [\beta_1, N_1] =\bar e_1(\beta_1,N_1)$. 
Similarly, 
$$\bar e_2(\beta_1,\beta_2) = [\bar e_1(\beta_1,\beta_2),\beta_2] = [\bar e_1(\beta_1, N_1),\lambda I+N_1]= [\bar e_1(\beta_1,N_1),N_1]=\bar e_2(\beta_1,N_1).$$
Hence, the claim is true for $m=1$.

\textbf{Induction Hypothesis :} Let the claim is true for $m=2,3,...,r$.

\textbf{Inductive step :} By the induction hypothesis we have $$\bar e_{r+1}(\beta_1,\beta_2)=\bar e_{r+1}(\beta_1,N_1).$$

Now \begin{eqnarray*}
    \bar e_{r+2}(\beta_1,\beta_2)=[\bar e_{r+1}(\beta_1,\beta_2),\beta_2] = [\bar e_{r+1}(\beta_1, N_1),\lambda I+N_1]= [\bar e_{r+1}(\beta_1,N_1),N_1]=\bar e_{r+2}(\beta_1,N_1).
\end{eqnarray*}
Therefore, the statement is true for all $m\in \mathbb{N}$.

Now $\mathfrak{sl}_2(k) = span\left\{ \Delta = \left(\begin{array}{cc} 1 & 0 \\ 0 & -1   \end{array}\right), \ N_1=\left(\begin{array}{cc} 0 & 1 \\ 0 & 0 \end{array}\right), \ N_2 = \left(\begin{array}{cc} 0 & 0 \\ 1 & 0  \end{array} \right) \right\}$. Consider $\widehat \beta_1=\beta_1-2^{-1}\tr(\beta_1)I$. Then it is easy to see that $\bar e_1(\widehat{\beta}_1,N_1)=\bar e_1(\beta_1,N_1)$ and hence we obtain
\begin{equation}{\label{lie 1}}
\bar e_{m+1}(\beta_1,N_1)=\bar e_{m+1}(\widehat{\beta}_1,N_1);
\end{equation}
for $m\geq 1$ where $\widehat{\beta}_1 \in \mathfrak{sl}_2(k)$. Therefore,
\begin{equation}{\label{lie 2}}
\widehat \beta_1 = \gamma_1 \Delta + \gamma_2 N_1 + \gamma_3 N_2\ \ \  \text{for some}\  \gamma_1, \gamma_2, \gamma_3 \in k.
\end{equation}
Note that $\bar e_1(\Delta,N_1) = \Delta N_1-N_1\Delta=2N_1$ which implies $\bar e_2(\Delta, N_1) = \mathbf{0}$. Moreover, $\bar e_1(N_1, N_1) = \mathbf{0}$ and $\bar e_1(N_2, N_1)= N_2N_1-N_1N_2= -\Delta$. This gives, $\bar e_2(N_2,N_1)=\bar e_1(-\Delta,N_1)=-2N_1$ and hence $\bar e_3(N_2,N_1)=\mathbf{0}$. By Equation~(\ref{lie 1}) and~(\ref{lie 2}) we obtain \begin{equation}{\label{lie 3}}
\bar e_{m+1}(\beta_1,N_1)=\bar e_{m+1}(\widehat{\beta}_1,N_1)=\gamma_1\bar e_{m+1}(\Delta,N_1)+\gamma_2\bar e_{m+1}(N_1,N_1)+\gamma_3\bar e_{m+1}(N_2,N_1).
\end{equation}

 Therefore, we have the following two cases:
 \begin{enumerate}
\item When $m\geq 2$, by Equation~(\ref{lie 3}) we obtain $\bar e_{m+1}(\beta_1,N_1)=\mathbf{0}$. Hence $\bar e_{m+1}(\beta_1,\beta_2)=\mathbf{0}$.
\item When $m=1$, by Equation~(\ref{lie 3}) we obtain $\bar e_{m+1}(\beta_1, N_1) = -2\gamma_3N_1$. Hence $\bar e_{m+1}(\beta_1, \beta_2) = -2\gamma_3N_1$.
\end{enumerate}
Therefore, for $m\geq 1$, $\bar e_{m+1}(\beta_1, \beta_2)$ is nilpotent. As $\widehat{g}=\bar e_{m+1}(\beta_1, \beta_2)$ therefore $\widehat{g}$ must be nilpotent, which is a contradiction (as $g$ is regular semisimple by assumption). Hence $h_2$ must be regular semisimple.
\end{proof}
\begin{remark}{\label{regular semi criterion lie algebra}}
It is easy to see from the proof of Proposition~\ref{lem h2 regular semisimp} that, if $A=\bar e_{m+1}(g_1, g_2)$ for some $g_1, g_2\in \M_2(k)$ then there exist $\widehat{g}_1=g_1-2^{-1}\tr(g_1)I$ in $\mathfrak{sl}_2(k)$ such that $A=\bar e_{m+1}(\widehat{g}_1,g_2)$.
\end{remark}

\begin{proposition}{\label{regularsemisimplecentral}}
Let $\mathbf{g}$ be a regular semisimple element in $\mathfrak{sl}_2(k)$ and $m\geq 1$ be a positive integer. Then, any lift $A$ of $\mathbf{g}$ in $\mathfrak{sl}_2^{\circ}(\mathcal{O})$ belongs to $\im(e_{m+1}^{\mathcal{O}})$ if and only if $\mathbf{g}$ belongs to $\im(\bar e_{m+1})$. 
\end{proposition}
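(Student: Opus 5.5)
The forward direction is immediate. Suppose a lift $A\in\mathfrak{sl}_2^{\circ}(\mathcal{O})$ of $\mathbf{g}$ satisfies $A=e_{m+1}^{\mathcal{O}}(\mathbf{h}_1,\mathbf{h}_2)$. Reducing modulo $\mathfrak{m}$ commutes with products and commutators, so $\mathbf{g}=\bar A=\bar e_{m+1}(\bar{\mathbf{h}}_1,\bar{\mathbf{h}}_2)$, and hence $\mathbf{g}\in\im(\bar e_{m+1})$.

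For the converse, I would reduce to Proposition~\ref{adelic 2}. Suppose $\mathbf{g}=\bar e_{m+1}(\mathbf{\bar g}_1,\mathbf{\bar g}_2)$ for some $\mathbf{\bar g}_1,\mathbf{\bar g}_2\in\M_2(k)$. It then suffices to check the two hypotheses of Proposition~\ref{adelic 2} for this pair:
\begin{itemize}
\item[(a)] $Z(\bar e_m(\mathbf{\bar g}_1,\mathbf{\bar g}_2))\cap Z(\mathbf{\bar g}_2)=\{\mathbf{0}\}$;
\item[(b)] $Z(\mathbf{\bar g}_2)\cap Z(\mathbf{\bar g}_2)^{\perp}=\{\mathbf{0}\}$ in $\mathfrak{sl}_2(k)$.
\end{itemize}
Once both hold, Proposition~\ref{adelic 2} says that every lift $A$ of $\mathbf{g}$ in $\mathfrak{sl}_2^{\circ}(\mathcal{O})$ equals $e_{m+1}^{\mathcal{O}}(\mathbf{h}_1,\mathbf{h}_2)$ for suitable lifts $\mathbf{h}_i$, which is the claim.

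For (b), Proposition~\ref{lem h2 regular semisimp} applies because $\mathbf{g}$ is regular semisimple, so $\mathbf{\bar g}_2$ is regular semisimple. Lemma~\ref{fiberortho} then gives (b) directly.

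For (a), I would use $\mathbf{g}=[\bar e_m(\mathbf{\bar g}_1,\mathbf{\bar g}_2),\mathbf{\bar g}_2]$, so $(\bar e_m(\mathbf{\bar g}_1,\mathbf{\bar g}_2),\mathbf{\bar g}_2)\in E_1(\mathbf{g})$. A regular semisimple element of $\mathfrak{sl}_2(k)$ has distinct eigenvalues, so its minimal polynomial equals its characteristic polynomial and it is cyclic. Lemma~\ref{fiber 1} then yields (a).

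I do not expect a real obstacle; the step needing the most care is confirming that regular semisimple implies cyclic, so that Lemma~\ref{fiber 1} applies. Note also that Lemma~\ref{fiber 1} concerns pairs in $\M_2(k)$, so no trace normalization of $\bar e_m(\mathbf{\bar g}_1,\mathbf{\bar g}_2)$ is needed, although Remark~\ref{regular semi criterion lie algebra} would allow one if desired.
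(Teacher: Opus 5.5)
Your proposal is correct and follows the paper's proof essentially step for step. The forward direction is reduction mod $\mathfrak{m}$. For the converse you take a preimage $(\mathbf{\bar g}_1,\mathbf{\bar g}_2)$, get the orthogonality hypothesis from Proposition~\ref{lem h2 regular semisimp} and Lemma~\ref{fiberortho}, get the centralizer hypothesis from Lemma~\ref{fiber 1} applied to $(\bar e_m(\mathbf{\bar g}_1,\mathbf{\bar g}_2),\mathbf{\bar g}_2)\in E_1(\mathbf{g})$, and conclude with Proposition~\ref{adelic 2}. Your remark that a regular semisimple element is cyclic supplies the one small step the paper leaves implicit when it invokes Lemma~\ref{fiber 1}.
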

\begin{proof}
Let, $A\in \mathfrak{sl}_2^{\circ}(\mathcal{O})$. Then it is obvious that $\mathbf{g}\in \im(\bar e_{m+1})$. We need to prove the converse part.

Let $\mathbf{g}\in \im(\bar e_{m+1})$, where $m\geq 1$. Then, there exist $h_1, h_2\in \M_2(k)$ such that $\bar e_{m+1}(h_1, h_2) = \mathbf{g}$. By Proposition~\ref{lem h2 regular semisimp}, we get that $h_2$ must be regular semisimple. Therefore, Lemma~\ref{fiberortho} implies $Z(h_2)\cap Z(h_2)^{\perp} = \{\mathbf{0}\}$ in $\mathfrak{sl}_2(k)$. As $\mathbf{g} = [\bar e_m(h_1, h_2), h_2]$, by Lemma~\ref{fiber 1} we have $Z(\bar e_m(h_1, h_2)) \cap Z(h_2) = \{\mathbf{0}\}$. Invoking Proposition~\ref{adelic 2}, our result follows.
 \end{proof}

%% file: surj.tex
\section{Surjectivity results}
In this section, we prove our main theorems.

\begin{proposition}{\label{liftingcentralresult1}}
Let $\mathcal{O}$ be a local principal ideal ring, complete with respect to its maximal ideal $\mathfrak{m}=(\pi)$ with residue field $k$ of characteristic $\neq 2$. Let $A$ be any element of $\mathfrak{sl}_2^{\circ}(\mathcal{O})$ such that $\theta(A)=:\bar A\in \mathfrak{sl}_2(k)$ is non-zero and $m\geq 1$ be a positive integer. Then, $A\in \im (e_{m+1}^{\mathcal{O}})$ in $\M_2(\mathcal{O})$ if and only if $\bar A\in \im(\bar e_{m+1})$ in $\M_2(k)$.
\end{proposition}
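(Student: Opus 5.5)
The forward direction is immediate: if $A = e_{m+1}^{\mathcal{O}}(X,Y)$ with $X,Y\in\M_2(\mathcal{O})$, then applying the reduction $\theta$ entrywise commutes with products and differences. Hence $\bar A = \bar e_{m+1}(\bar X,\bar Y)\in\im(\bar e_{m+1})$. All the work is in the converse, which I will prove by a case split on the type of the nonzero trace-zero matrix $\bar A\in\mathfrak{sl}_2(k)$, reducing each case to results already established.

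\textbf{Classification of $\bar A$.} Since $\bar A\in\mathfrak{sl}_2(k)$ is nonzero, it is not scalar: a scalar $cI$ with $\tr = 2c = 0$ and $\operatorname{char}k\neq 2$ forces $c=0$. A non-scalar $2\times 2$ matrix is cyclic, since its minimal polynomial has degree $2$ and so equals the characteristic polynomial. So $\bar A$ is cyclic, with $\chi_{\bar A}(t)=t^2+\det\bar A$, and there are exactly two possibilities.
\begin{enumerate}
\item $\det\bar A\neq 0$. The roots $\pm\sqrt{-\det\bar A}$ in $\bar k$ are distinct (again using $\operatorname{char}k\neq 2$), so $\bar A$ is regular semisimple.
\item $\det\bar A=0$. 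Then $\bar A$ is a nonzero nilpotent, i.e.\ cyclic but not regular semisimple.
\end{enumerate}

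\textbf{Case (1).} If $\bar A$ is regular semisimple and $\bar A\in\im(\bar e_{m+1})$, then Proposition~\ref{regularsemisimplecentral}, applied to the lift $A\in\mathfrak{sl}_2^{\circ}(\mathcal{O})$ of $\mathbf{g}=\bar A$, gives $A\in\im(e_{m+1}^{\mathcal{O}})$.

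\textbf{Case (2).} If $\bar A$ is cyclic but not regular semisimple, Proposition~\ref{cyclicnilpotentcentral} shows that every lift of $\bar A$ in $\mathfrak{sl}_2^{\circ}(\mathcal{O})$ lies in $\im(e_{m+1}^{\mathcal{O}})$. This holds unconditionally, so the hypothesis $\bar A\in\im(\bar e_{m+1})$ is not even needed here. It is also consistent with that hypothesis, since Proposition~\ref{nilpotentengel} shows such $\bar A$ always lies in $\im(\bar e_{m+1})$.

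\textbf{Main obstacle.} The only delicate point is making the dichotomy exhaustive. This rests on two uses of $\operatorname{char}k\neq 2$: to rule out nonzero scalar trace-zero matrices, and to see that $\det\bar A\neq 0$ gives distinct eigenvalues. I also need a nonzero nilpotent $\bar A$ to be $k$-conjugate to $\left(\begin{smallmatrix}0&1\\0&0\end{smallmatrix}\right)$, which is what Proposition~\ref{cyclicnilpotentcentral} uses internally. This holds because a nonzero nilpotent $2\times2$ matrix has rank one: choosing $v\notin\ker\bar A$, the pair $\{\bar A v, v\}$ is a basis in which $\bar A$ takes that form. With the dichotomy established, the proposition follows.
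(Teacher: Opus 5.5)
Your proof is correct and matches the paper's argument: the forward direction by entrywise reduction, then the same split of a nonzero $\bar A\in\mathfrak{sl}_2(k)$ into the regular semisimple case (Proposition~\ref{regularsemisimplecentral}) and the nonzero nilpotent case (Proposition~\ref{cyclicnilpotentcentral}). Beyond the paper, you also justify why the dichotomy is exhaustive, using $\chi_{\bar A}(t)=t^2+\det\bar A$ and $\mathrm{char}\,k\neq 2$, and you note that the image hypothesis is not needed in the nilpotent case; the paper only asserts the dichotomy.
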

\begin{proof}
As $\bar A\in \mathfrak{sl}_2(k)$ is non-trivial, either $\bar A$ is regular semisimple or $\bar A$ is nilpotent. Let $\bar A\in \im(\bar e_{m+1})$ in $\M_2(k)$. Then we have the following cases:
\begin{enumerate}
\item When $\bar A$ is regular semisimple in $\mathfrak{sl}_2(k)$: In this case Proposition~\ref{regularsemisimplecentral} ensures that $A\in \im(e_{m+1}^{\mathcal{O}})$ in $\M_2(\mathcal{O})$.
\item  When $\bar A$ is non-trivial nilpotent element in $\mathfrak{sl}_2(k)$: The Proposition~\ref{cyclicnilpotentcentral} implies  $A\in \im(e_{m+1}^{\mathcal{O}})$ in $\M_2(\mathcal{O})$.
\end{enumerate}
Hence, in both the cases $A\in \im(e_{m+1}^{\mathcal{O}})$ in $\M_2(\mathcal{O})$. On the other hand, when $A\in \im(e_{m+1}^{\mathcal{O}}) $ in $\M_2(\mathcal{O})$, it clearly implies $\bar A\in \im(\bar e_{m+1})$ in $\M_2(k)$. This completes the proof.
\end{proof}

\begin{lemma}{\label{splitting scalar and trace zero}}
For any $\ell\geq 1$, every matrix $g\in \M_2(\mathcal{O}_{\ell})$ can be written as $g=\alpha + \beta$ where $\alpha$ is a scalar matrix in $\M_2(\mathcal{O}_{\ell})$ and $\beta\in \mathfrak{sl}_2^{\circ}(\mathcal{O}_{\ell})$. 
\end{lemma}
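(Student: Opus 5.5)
The plan is to split off the scalar part explicitly. Since the residue field $k$ has characteristic $\neq 2$, the element $2$ is a unit in $k$. Because $\mathcal{O}_\ell$ is a local ring with residue field $k$, any element of $\mathcal{O}_\ell$ whose image in $k$ is nonzero is a unit. So $2$ is invertible in $\mathcal{O}_\ell$ for every $\ell \geq 1$, and $2^{-1}\in\mathcal{O}_\ell$ makes sense. This is the only point where the hypothesis on the characteristic is used, and it is the only step needing any justification.

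Given $g\in \M_2(\mathcal{O}_\ell)$, I would set $\alpha = 2^{-1}\tr(g) I$, which is a scalar matrix in $\M_2(\mathcal{O}_\ell)$, and $\beta = g - \alpha$. Using linearity of the trace and $\tr(I)=2$, we get
\[
\tr(\beta) = \tr(g) - 2^{-1}\tr(g)\cdot 2 = 0 .
\]
Hence $\beta\in\mathfrak{sl}_2^{\circ}(\mathcal{O}_\ell)$, and $g=\alpha+\beta$ as required. This is the same normalization $\widehat{g}_1 = g_1 - 2^{-1}\tr(g_1)I$ used in Remark~\ref{regular semisimple criterion lie algebra}, now carried out over $\mathcal{O}_\ell$ instead of $k$.

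I expect no real obstacle. The only care needed is the invertibility of $2$ in the non-field ring $\mathcal{O}_\ell$, which the locality argument above settles. Uniqueness of the decomposition is not claimed, though it also follows: a scalar matrix $cI$ of trace zero satisfies $2c=0$, so $c=0$.
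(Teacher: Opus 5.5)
Your proposal is correct and is the paper's argument: the paper writes $g=\begin{pmatrix} a & b\\ c & d\end{pmatrix}$ and takes $\alpha=\frac{a+d}{2}I$ with $\beta=g-\alpha$ written out entrywise, which is exactly your $\alpha = 2^{-1}\tr(g)I$. Your justification that $2$ is a unit in the local ring $\mathcal{O}_\ell$ (its image in $k$ is nonzero) makes explicit a step the paper leaves implicit.
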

\begin{proof}
Let, $g=\left(\begin{array}{cc} a & b \\ c & d \end{array} \right)$ be a matrix in $\M_2(\mathcal{O}_{\ell})$. Consider $\alpha = \left(\begin{array}{cc} \frac{a+d}{2} & 0 \\ 0 & \frac{a+d}{2} \end{array}\right)$ and $\beta = \left(\begin{array}{cc} \frac{a-d}{2} & b \\ c & \frac{d-a}{2} \end{array}\right)$. Clearly, $\alpha$ is a scalar matrix and $\tr(\beta)=0$. Hence, the result follows.
\end{proof}
Next, we deal with the lifts of zero.
\begin{lemma}{\label{lift of zero}}
Suppose $\im(\bar e_{m+1})=\mathfrak{sl}_2(k)$ and $A$ is a non-zero lift of $\mathbf{0}$ in $\mathfrak{sl}_2^{\circ}(\mathcal{O})$. Then, for $m\geq 1$, $A\in \im(e_{m+1}^{\mathcal{O}})$.
\end{lemma}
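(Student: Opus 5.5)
Since $A\neq \mathbf{0}$ but $\bar A=\mathbf{0}$, we factor out the largest power of $\pi$ dividing $A$. Let $r\geq 1$ be the largest integer such that every entry of $A$ lies in $\mathfrak{m}^r$; such an $r$ exists because $A\neq\mathbf{0}$ and, $\mathcal{O}$ being complete and local principal, $\bigcap_r\mathfrak{m}^r=0$ (if $\mathcal{O}$ is Artinian, $\pi$ is nilpotent and $r$ is bounded by the length). Write $A=\pi^r A'$ with $A'\in\M_2(\mathcal{O})$ and $\bar{A'}\neq\mathbf{0}$.

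The first step is to arrange $\tr(A')=0$. We have $\pi^r\tr(A')=\tr(A)=0$, but $\tr(A')$ may still be a nonzero element of $\mathrm{Ann}(\pi^r)$ when $\mathcal{O}$ has zero divisors. To fix this, split $A'=\alpha+\beta$ as in Lemma~\ref{splitting scalar and trace zero}, with $\alpha=2^{-1}\tr(A')I$ and $\beta\in\mathfrak{sl}_2^{\circ}(\mathcal{O})$; this is valid since $2$ is a unit. Then $\pi^r\alpha=0$, so $A=\pi^r\beta$. Moreover $\bar\beta\neq\mathbf{0}$: if $\bar\beta=\mathbf{0}$, then $\bar{A'}=\bar\alpha$ would be a nonzero scalar with $\tr(\bar{A'})\neq 0$, and hence $\tr(A')$ would be a unit. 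That contradicts $\pi^r\tr(A')=0$ unless $\pi^r=0$, which is impossible because $A\neq\mathbf{0}$. So $A=\pi^r\beta$ with $\beta\in\mathfrak{sl}_2^{\circ}(\mathcal{O})$ and $\bar\beta$ a nonzero element of $\mathfrak{sl}_2(k)=\im(\bar e_{m+1})$.

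Proposition~\ref{liftingcentralresult1} now gives $h_1,h_2\in\M_2(\mathcal{O})$ with $\beta=e_{m+1}^{\mathcal{O}}(h_1,h_2)$. Since $e_{m+1}$ is linear in its first argument, $e_{m+1}^{\mathcal{O}}(\pi^r h_1,h_2)=\pi^r e_{m+1}^{\mathcal{O}}(h_1,h_2)=\pi^r\beta=A$, so $A\in\im(e_{m+1}^{\mathcal{O}})$. If trace-zero arguments are wanted, Remark~\ref{regular semi criterion lie algebra} lets us replace $h_1$ by $h_1-2^{-1}\tr(h_1)I$, and similarly for $h_2$, without changing the value.

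The main obstacle is the first step: producing a trace-zero $\beta$ with nonzero reduction when $\pi$ is a zero divisor. The splitting argument above handles it.
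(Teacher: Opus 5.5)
Your proof is correct and follows the paper's approach. Like the paper, you write $A=\pi^{\mu}\mathfrak{X}$ with $\mathfrak{X}\in\mathfrak{sl}_2^{\circ}(\mathcal{O})$ and $\bar{\mathfrak{X}}\neq\mathbf{0}$, discarding the scalar part, which is killed by $\pi^{\mu}$, via Lemma~\ref{splitting scalar and trace zero}; you then apply Proposition~\ref{liftingcentralresult1} to $\mathfrak{X}$ and scale the first argument. The one difference is that you get the factorization directly in $\mathcal{O}$ from $\bigcap_r\mathfrak{m}^r=0$, whereas the paper builds it level by level through the $\mathcal{O}_j$ and passes to the inverse limit; your version is shorter and avoids having to keep the chosen trace-zero matrices compatible across levels.
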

\begin{proof}
Let $A\neq \mathbf{0}$ in $\mathfrak{sl}_2^{\circ}(\mathcal{O})$, which is a lift of $\mathbf{0}$. Then, there exists an $\ell\geq 2$ such that $A_{\ell}\neq \mathbf{0}$ in $\M_2(\mathcal{O}_{\ell})$ and $A_{\ell-1}=\mathbf{0}$ in $\M_2(\mathcal{O}_{\ell-1})$. As $ker(\theta_{\ell-1})$ is generated by $\pi_{\ell}^{\ell-1}$ and $A_{\ell}\in \mathfrak{sl}_2^{\circ}(\mathcal{O}_{\ell})$, therefore by Lemma~\ref{splitting scalar and trace zero} we can write $A_{\ell}=\pi^{\ell-1}_{\ell}\left(\begin{array}{cc} \delta_{\ell} & u_{\ell} \\ v_{\ell} & -\delta_{\ell} \end{array}\right)$ for some $\delta_{\ell}, u_{\ell}, v_{\ell} \in \mathcal{O}_{\ell}$ where at least one of the $\delta_{\ell}, u_{\ell}, v_{\ell} \in \mathcal{O}_{\ell}$ is an unit in $\mathcal{O}_{\ell}$. Note that $A_{\ell}$ has trace-zero, hence $\pi_{\ell}^{\ell-1}$ times the scalar part in the decomposition given by Lemma~\ref{splitting scalar and trace zero} will be zero. As $A_{\ell+1}$ is a lift of $A_{\ell}$ therefore 
$$A_{\ell+1} = \pi_{\ell+1}^{\ell-1}\left(\begin{array}{cc} \delta_{\ell+1} & u_{\ell+1} \\ v_{\ell+1} & -\delta_{\ell+1} \end{array}\right) + \pi_{\ell+1}^{\ell}C \ \ \text{for some}\ \  C\in\M_2(\mathcal{O}_{\ell+1}),$$  
where $\theta_{\ell}\left(\pi_{\ell+1}^{\ell-1} \left(\begin{array}{cc} \delta_{\ell+1} & u_{\ell+1} \\
v_{\ell+1} & -\delta_{\ell+1} \end{array}\right) \right) = A_{\ell}$ and $\delta_{\ell+1}, u_{\ell+1}, v_{\ell+1}$ are the lifts of $\delta_{\ell}, u_{\ell}, v_{\ell}$ respectively with respect to the map $\theta_{\ell}$. 
      
Again, by using the decomposition in Lemma~\ref{splitting scalar and trace zero} and the fact that $\tr(A_{\ell+1}) = 0$, we obtain $\pi_{\ell+1}^{\ell}$ times the scalar part of $C$ will be zero. Hence, $\pi^{\ell}_{\ell+1}C = \pi_{\ell+1}^{\ell} \left(\begin{array}{cc} c_1 & c_2 \\ c_3 & -c_1 \end{array}\right)$ for some $c_1, c_2, c_3\in \mathcal{O}_{\ell+1}$. This further implies $A_{\ell+1} = \pi_{\ell+1}^{\ell-1}\left(\begin{array}{cc} \delta_{\ell+1}' & u_{\ell+1}' \\ v_{\ell+1}' & -\delta_{\ell+1}' \end{array}\right)$, where 
$$ \delta_{\ell+1}' = \delta_{\ell+1} + \pi_{\ell+1}c_1, \  u_{\ell+1}' = u_{\ell+1} + \pi_{\ell+1}c_2 \ \text{and}\ v_{\ell+1}' = v_{\ell+1} + \pi_{\ell+1}c_3.$$ 
It is easy to observe that 
$$\theta_{\ell}(\pi_{\ell+1}^{\ell-1} \delta_{\ell+1}') = \pi_{\ell}^{\ell-1}\delta_{\ell},\ \theta_{\ell}(\pi_{\ell+1}^{\ell-1}u_{\ell+1}') = \pi_{\ell}^{\ell-1}u_{\ell}\ \text{and}\  \theta_{\ell}(\pi_{\ell+1}^{\ell-1}v_{\ell+1}') = \pi_{\ell}^{\ell-1}v_{\ell}$$ 
and hence at least one of $\delta_{\ell+1}', u_{\ell+1}', v_{\ell+1}'$ must be an unit in $\mathcal{O}_{\ell+1}$.
   
Inductively using the above steps, and the canonical isomorphism $\mathcal{T}^*$ (described in Section~\ref{englifting}), we obtain that there exists a positive integer $\mu$ such that $A=\pi^{\mu} \left(\begin{array}{cc} \delta & u \\ v & -\delta \end{array} \right)$ for some $\delta, u, v\in \mathcal{O}$ and at least one of $\delta, u, v$ is a unit in $\mathcal{O}$. Therefore, if we assume $\mathfrak{X} = \left(\begin{array}{cc} \delta & u \\ v & -\delta \end{array}\right)$ then $\mathfrak{X}\in \mathfrak{sl}_2^{\circ}(\mathcal{O})$ and $\theta(\mathfrak{X}) = \mathfrak{\bar X}$ is non-trivial in $\mathfrak{sl}_2(k)$. As $\im(\bar e_{m+1}) = \mathfrak{sl}_2(k)$ by assumption, therefore $\mathfrak{\bar X}\in \im(\bar e_{m+1})$. Invoking Proposition~\ref{liftingcentralresult1} we obtain $\mathfrak{X}\in \im(e_{m+1}^\mathcal{O})$ in $\M_2(\mathcal{O})$. Therefore, there exist $\Lambda_1, \Lambda_2 \in \M_2(\mathcal{O})$ such that $e_{m+1}^{\mathcal{O}}(\Lambda_1, \Lambda_2) = \mathfrak{X}$. As $A = \pi^{\mu} \mathfrak{X}$ therefore $A = e_{m+1}^{\mathcal{O}}(\pi^{\mu}\Lambda_1, \Lambda_2)$. This completes the proof.
\end{proof}

Now we are ready to prove our first main theorem.
\begin{theorem}{\label{th1}}
Let $\mathcal{O}$ be a local principal ideal ring, complete with respect to its maximal ideal $\mathfrak{m} = (\pi)$ with residue field $k$ of characteristic $\neq 2$ and $m\geq 1$ be a positive integer. Then, $\im(e_{m+1}^{\mathcal{O}}) = \mathfrak{sl}_2^{\circ}(\mathcal{O})$ if and only if $\im(\bar e_{m+1}) = \mathfrak{sl}_2(k)$.
\end{theorem}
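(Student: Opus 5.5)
We prove the two implications separately. The backward direction reduces to Proposition~\ref{liftingcentralresult1} and Lemma~\ref{lift of zero}. The forward direction is a reduction modulo $\mathfrak{m}$ combined with the fact that every trace-zero matrix over $k$ lifts to a trace-zero matrix over $\mathcal{O}$.

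\emph{($\Leftarrow$)} Assume $\im(\bar e_{m+1})=\mathfrak{sl}_2(k)$.
\begin{enumerate}
\item First we check the containment $\im(e_{m+1}^{\mathcal{O}})\subseteq \mathfrak{sl}_2^{\circ}(\mathcal{O})$. For any $X,Y\in\M_2(\mathcal{O})$ we have $e_{m+1}^{\mathcal{O}}(X,Y)=[e_m^{\mathcal{O}}(X,Y),Y]$. Since $\tr(PQ)=\tr(QP)$ over a commutative ring, every commutator has trace $0$.
\item For the reverse containment, take $A\in\mathfrak{sl}_2^{\circ}(\mathcal{O})$ and split into cases according to $\bar A=\theta(A)$.
\begin{enumerate}
\item If $\bar A\neq\mathbf{0}$, then $\bar A\in\mathfrak{sl}_2(k)=\im(\bar e_{m+1})$ by hypothesis. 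Proposition~\ref{liftingcentralresult1} then gives $A\in\im(e_{m+1}^{\mathcal{O}})$.
\item If $\bar A=\mathbf{0}$ but $A\neq\mathbf{0}$, then Lemma~\ref{lift of zero} applies directly, since its hypothesis is exactly $\im(\bar e_{m+1})=\mathfrak{sl}_2(k)$.
\item If $A=\mathbf{0}$, then $A=e_{m+1}^{\mathcal{O}}(\mathbf{0},\mathbf{0})$.
\end{enumerate}
\end{enumerate}
This gives $\im(e_{m+1}^{\mathcal{O}})=\mathfrak{sl}_2^{\circ}(\mathcal{O})$.

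\emph{($\Rightarrow$)} Assume $\im(e_{m+1}^{\mathcal{O}})=\mathfrak{sl}_2^{\circ}(\mathcal{O})$.
\begin{enumerate}
\item The containment $\im(\bar e_{m+1})\subseteq\mathfrak{sl}_2(k)$ holds by the same trace argument as above.
\item Conversely, let $\bar B=\left(\begin{array}{cc} a & b\\ c & -a\end{array}\right)\in\mathfrak{sl}_2(k)$. Choose arbitrary lifts $\tilde a,\tilde b,\tilde c\in\mathcal{O}$ of $a,b,c$, and set $B=\left(\begin{array}{cc}\tilde a&\tilde b\\ \tilde c&-\tilde a\end{array}\right)$. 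Then $B\in\mathfrak{sl}_2^{\circ}(\mathcal{O})$ and $\theta(B)=\bar B$.
\item By hypothesis $B=e_{m+1}^{\mathcal{O}}(X,Y)$ for some $X,Y\in\M_2(\mathcal{O})$.
\item The reduction $\theta\colon\M_2(\mathcal{O})\to\M_2(k)$ is a ring homomorphism, so it commutes with iterated commutators. Hence $\bar B=\bar e_{m+1}(\bar X,\bar Y)$, which shows $\bar B\in\im(\bar e_{m+1})$.
\end{enumerate}

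No step here is a genuine obstacle, since the substantive work is already done in Propositions~\ref{regularsemisimplecentral}, \ref{cyclicnilpotentcentral}, \ref{liftingcentralresult1} and Lemma~\ref{lift of zero}. The only points needing care are the following.
\begin{itemize}
\item The case split on $\bar A$ is exhaustive: a nonzero element of $\mathfrak{sl}_2(k)$ with $\operatorname{char}(k)\neq 2$ is either regular semisimple or cyclic nilpotent. This is already used inside Proposition~\ref{liftingcentralresult1}.
\item In the forward direction the lift must be chosen with trace exactly $0$ in $\mathcal{O}$, not merely trace in $\mathfrak{m}$. The explicit choice $-\tilde a$ in the $(2,2)$ entry ensures this.
\end{itemize}
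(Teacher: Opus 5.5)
Your proposal is correct and follows the paper's proof essentially verbatim. The backward direction uses the same three-way case split on $\bar A$: the case $\bar A\neq\mathbf{0}$ via Proposition~\ref{liftingcentralresult1}, the case $A=\mathbf{0}$ trivially, and a nonzero lift of zero via Lemma~\ref{lift of zero}, together with the trace-zero containment. Your forward direction just makes explicit the paper's one-line appeal to the surjection $\mathfrak{sl}_2^{\circ}(\mathcal{O})\to\mathfrak{sl}_2(k)$ and to the fact that reduction commutes with iterated commutators.
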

\begin{proof}
Using the natural surjection $\mathfrak{sl}_2^{\circ}(\mathcal{O}) \rightarrow \mathfrak{sl}_2(k)$, it is easy to observe that if $\im(e_{m+1}^{\mathcal{O}}) = \mathfrak{sl}_2^{\circ}(\mathcal{O})$ then $\im(\bar e_{m+1}) = \mathfrak{sl}_2(k)$. So, we are left to prove the converse part. 

Let $\im(\bar e_{m+1}) = \mathfrak{sl}_2(k)$ in $\M_2(k)$. Let $A\in \mathfrak{sl}_2^{\circ}(\mathcal{O})$ and $\bar A\in \mathfrak{sl}_2(k)$. We have the following cases:
\begin{enumerate}
\item When $\bar A$ is non-zero in $\mathfrak{sl}_2(k)$: In this case, Proposition~\ref{liftingcentralresult1} gives $A\in \im(e_{m+1}^{\mathcal{O}}) $.
\item When $\bar A=\mathbf{0}$ in $\mathfrak{sl}_2(k)$: In this case, there are two possibilities: (1) $A=\mathbf{0}$ and (2) $A\neq \mathbf{0}$.
\begin{enumerate}
\item When $A=\mathbf{0}$, clearly, $A\in \im(e_{m+1}^{\mathcal{O}}) $ in $\M_2(\mathcal{O})$.
\item When $A\neq \mathbf{0}$, the Lemma~\ref{lift of zero} gives that $A\in \im(e_{m+1}^{\mathcal{O}})$. 
\end{enumerate}
\end{enumerate}
Therefore, $A\in \im(e_{m+1}^{\mathcal{O}})$ in $\M_2(\mathcal{O})$. Hence $\mathfrak{sl}_2^{\circ}(\mathcal{O})\subset \im(e_{m+1}^{\mathcal{O}})$ in $\M_2(\mathcal{O})$. It is easy to see that $\im(e_{m+1}^{\mathcal{O}}) \subset \mathfrak{sl}_2^{\circ}(\mathcal{O})$. This proves that $\im(e_{m+1}^{\mathcal{O}})= \mathfrak{sl}_2^{\circ}(\mathcal{O})$. 
\end{proof}

Now, we need to deal with the surjectivity of these maps. For that, we require the following lemma.
\begin{lemma}{\label{from M2 to trace zero}}
Let $g\in \mathfrak{sl}_2^{\circ}(\mathcal{O}_{\ell})$ and $e_{m+1}^{\mathcal{O}_{\ell}}(h_1,h_2)=g$ for some $h_1, h_2\in \mathfrak{sl}_2^{\circ}(\mathcal{O}_{\ell})$. Suppose $A$ is a lift of $g$ in $\mathfrak{sl}_2^{\circ}(\mathcal{O}_{\ell+1})$ and $A = e_{m+1}^{\mathcal{O}_{\ell+1}}(s_1, s_2)$ for some $s_1, s_2\in \M_2(\mathcal{O}_{\ell+1})$ which are lifts of $h_1$ and $h_2$ respectively. Then, there exist $\widehat{s}_1, \widehat{s}_2\in \mathfrak{sl}_2^{\circ}(\mathcal{O}_{\ell+1})$ such that $e_{m+1}^{\mathcal{O}_{\ell+1}}(\widehat{s}_1,\widehat{s}_2) = A$, and $\widehat{s}_1, \widehat{s}_2$ are the lifts of $h_1$ and $h_2$ respectively.
\end{lemma}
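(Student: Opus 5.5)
The plan is to subtract the scalar parts of $s_1$ and $s_2$, using Lemma~\ref{splitting scalar and trace zero}, and to check that neither the value of the Engel word nor the lifting property is affected.

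First I would write $s_i = \alpha_i + \beta_i$ with $\alpha_i = \frac{\tr(s_i)}{2} I$ scalar and $\beta_i\in \mathfrak{sl}_2^{\circ}(\mathcal{O}_{\ell+1})$, for $i=1,2$. This decomposition is available because $2$ is a unit in $\mathcal{O}_{\ell+1}$, since $\mathrm{char}(k)\neq 2$. Then I set $\widehat{s}_i = \beta_i = s_i - \frac{\tr(s_i)}{2}I$.

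Next I would show that $e_{m+1}^{\mathcal{O}_{\ell+1}}(\widehat{s}_1,\widehat{s}_2) = e_{m+1}^{\mathcal{O}_{\ell+1}}(s_1,s_2)$. Scalar matrices are central in $\M_2(\mathcal{O}_{\ell+1})$, so $[s_1,s_2] = [\beta_1+\alpha_1, \beta_2+\alpha_2] = [\beta_1,\beta_2]$. For the induction step, assume $e_j(s_1,s_2)=e_j(\beta_1,\beta_2)$. Then
\[
e_{j+1}(s_1,s_2)=[e_j(\beta_1,\beta_2),\beta_2+\alpha_2]=[e_j(\beta_1,\beta_2),\beta_2]=e_{j+1}(\beta_1,\beta_2).
\]
This is the same argument as in Proposition~\ref{lem h2 regular semisimp} and Remark~\ref{regular semi criterion lie algebra}, where it was done over $k$; it works verbatim over any commutative ring. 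Hence $A=e_{m+1}^{\mathcal{O}_{\ell+1}}(\widehat{s}_1,\widehat{s}_2)$.

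Finally I would check the lifting property, which is the only step needing care. Since $\theta_\ell(s_i)=h_i$ and reduction commutes with the trace, $\tr(s_i)$ reduces to $\tr(h_i)=0$ in $\mathcal{O}_\ell$. Therefore $\frac{\tr(s_i)}{2}$ lies in $\ker(\theta_\ell)=(\pi_{\ell+1}^{\ell})$, using again that $2$ is invertible. So $\theta_\ell(\widehat{s}_i)=\theta_\ell(s_i)-0=h_i$, and the $\widehat{s}_i$ are lifts of the $h_i$. This is exactly where the hypothesis $h_i\in\mathfrak{sl}_2^{\circ}(\mathcal{O}_\ell)$ (trace exactly zero, not merely in $\mathfrak{m}_\ell$) is used.
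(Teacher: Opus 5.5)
Your proof is correct and follows the same plan as the paper. You use the same decomposition $\widehat{s}_i = s_i - \tfrac{1}{2}\tr(s_i)I$, and the lift property comes from $\tr(s_i)\in\ker(\theta_\ell)$ because $\tr(h_i)=0$.

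The only difference is how you show the Engel value is unchanged. The paper writes the scalar part as $\pi_{\ell+1}^{\ell}\tfrac{u_i}{2}I$. It then uses the first-order expansion
\[
e_{m+1}^{\mathcal{O}_{\ell+1}}(\widehat{s}_1+\delta X,\widehat{s}_2+\delta Y)=e_{m+1}^{\mathcal{O}_{\ell+1}}(\widehat{s}_1,\widehat{s}_2)+\delta\, De_{m+1}^{\mathcal{O}_{\ell+1}}(X,Y),
\]
with $\delta=\pi_{\ell+1}^{\ell}$ and $\delta^2=0$, together with Remark~\ref{scalarmatrixderivativemap}, which says the derivative vanishes in scalar directions.

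You instead use centrality of scalar matrices to get the exact identity $e_{m+1}^{\mathcal{O}_{\ell+1}}(s_1,s_2)=e_{m+1}^{\mathcal{O}_{\ell+1}}(\widehat{s}_1,\widehat{s}_2)$ directly. This is more elementary and holds for arbitrary scalar shifts. As a result, the condition $\tr(s_i)\in\ker(\theta_\ell)$ is needed only for the lifting property, not for the equality of Engel values.
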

\begin{proof}
Since $s_i$ is the lift of $h_i$, $\tr(s_i)\in ker(\theta_{\ell})$ for $i=1, 2$. Hence, $\tr(s_i) = \pi_{\ell+1}^{\ell}u_i$ for some $u_i\in \mathcal{O}_{\ell+1}$. Now, by Lemma~\ref{splitting scalar and trace zero}, $s_i=\widehat{s}_i+c_i$ where $\tr(\widehat{s}_i)=0$ and $c_i = \gamma_iI$ for some $\gamma_i\in \mathcal{O}_{\ell+1}$. Taking trace on both sides of this equation, we get  $\pi_{\ell+1}^{\ell} u_i = 2\gamma_i$ for $i=1, 2$. Therefore, $c_i = \pi_{\ell+1}^{\ell} \left(\begin{array}{cc} \frac{u_i}{2} &  \\  & \frac{u_i}{2} \end{array}\right)$. Consequently, \begin{eqnarray*}
A &=& e_{m+1}^{\mathcal{O}_{\ell+1}}(s_1,s_2) = e_{m+1}^{\mathcal{O}_{\ell+1}}(\widehat{s}_1, \widehat{s}_2) + \pi_{\ell+1}^{\ell} D e^{\mathcal{O}_{\ell+1}}_{m+1}\left(\frac{u_1}{2}I, \frac{u_2}{2}I\right) = e_{m+1}^{\mathcal{O}_{\ell+1}}(\widehat{s}_1,\widehat{s}_2)
\end{eqnarray*}
as $De^{\mathcal{O}_{\ell+1}}_{m+1}(2^{-1}u_1I,2^{-1}u_2I) =\mathbf{0}$ by Remark~\ref{scalarmatrixderivativemap}.
This gives the required result.
\end{proof}

Now, we prove our second main theorem of this article.
\begin{theorem}{\label{th2}}
Let $\mathcal{O}$ be a local principal ideal ring, complete with respect to its maximal ideal $\mathfrak{m}=(\pi)$ with residue field $k$ of characteristic $\neq 2$ and $m\geq 1$ be any given positive integer. Then, $e_{m+1}^{\mathcal{O}}\colon \mathfrak{sl}_2^{\circ}(\mathcal{O}) \times \mathfrak{sl}_2^{\circ}(\mathcal{O}) \rightarrow\mathfrak{sl}_2^{\circ}(\mathcal{O})$ is surjective if and only if $\bar e_{m+1}\colon \mathfrak{sl}_2(k) \times \mathfrak{sl}_2(k) \rightarrow \mathfrak{sl}_2(k)$ is surjective.
\end{theorem}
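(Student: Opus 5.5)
The forward direction is immediate: the reduction map $\theta\colon \mathfrak{sl}_2^{\circ}(\mathcal{O})\rightarrow \mathfrak{sl}_2(k)$ is surjective, and it commutes with $e_{m+1}$. So if every element of $\mathfrak{sl}_2^{\circ}(\mathcal{O})$ is $e_{m+1}^{\mathcal{O}}(h_1,h_2)$ with $h_i$ of trace zero, then every element of $\mathfrak{sl}_2(k)$ is $\bar e_{m+1}(\bar h_1,\bar h_2)$ with $\bar h_i\in\mathfrak{sl}_2(k)$.

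For the converse, assume $\bar e_{m+1}$ is surjective on $\mathfrak{sl}_2(k)\times\mathfrak{sl}_2(k)$, and fix $A\in\mathfrak{sl}_2^{\circ}(\mathcal{O})$. I would follow the case split in the proof of Theorem~\ref{th1}, but refine each lifting step so that the pairs stay trace zero.

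\textbf{Case $\bar A\neq \mathbf{0}$.} Choose $(h_1,h_2)\in\mathfrak{sl}_2(k)^2$ with $\bar e_{m+1}(h_1,h_2)=\bar A$.
\begin{enumerate}
\item Check the hypotheses of Proposition~\ref{adelic 2} at $(h_1,h_2)$. If $\bar A$ is regular semisimple, Proposition~\ref{lem h2 regular semisimp} makes $h_2$ regular semisimple. Lemma~\ref{fiberortho} then gives $Z(h_2)\cap Z(h_2)^{\perp}=\{\mathbf{0}\}$, and Lemma~\ref{fiber 1}, applied to $\bar A=[\bar e_m(h_1,h_2),h_2]$, gives $Z(\bar e_m(h_1,h_2))\cap Z(h_2)=\{\mathbf{0}\}$.
\item If $\bar A$ is nilpotent, I would discard the given pair and use the one from Proposition~\ref{nilpotentengel} after conjugation. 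Its entries $Ph_iP^{-1}$ are already trace zero, and its second entry is regular semisimple, so the same two lemmas apply.
\item Run the level-by-level lifting in Step III of Proposition~\ref{adelic 2}. The derivative $De_{m+1}^{\mathcal{O}_{j+1}}$ is surjective onto $\mathfrak{sl}_2(\mathcal{O}_{j+1})$ at every level, so each $A_{j+1}$ is $e_{m+1}^{\mathcal{O}_{j+1}}(s_1,s_2)$ with $s_i$ lifting the previous pair.
\item At each level, apply Lemma~\ref{from M2 to trace zero}: if the previous pair is trace zero, the new pair can be replaced by a trace-zero pair with the same lifting property and the same value $A_{j+1}$. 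By induction from level $1$, the whole tower $(P_j,Q_j)$ is trace zero.
\item Lemma~\ref{lifting complete level} assembles the compatible tower into $(\Lambda_1,\Lambda_2)\in\mathfrak{sl}_2^{\circ}(\mathcal{O})^2$ with $e_{m+1}^{\mathcal{O}}(\Lambda_1,\Lambda_2)=A$; trace zero is preserved in the limit via $\mathcal{T}^*$.
\end{enumerate}

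\textbf{Case $\bar A=\mathbf{0}$.} If $A=\mathbf{0}$, take $(\mathbf{0},\mathbf{0})$. Otherwise, as in Lemma~\ref{lift of zero}, write $A=\pi^{\mu}\mathfrak{X}$ with $\mathfrak{X}\in\mathfrak{sl}_2^{\circ}(\mathcal{O})$ and $\bar{\mathfrak{X}}\neq\mathbf{0}$. The previous case gives $\mathfrak{X}=e_{m+1}^{\mathcal{O}}(\Lambda_1,\Lambda_2)$ with $\Lambda_i$ of trace zero. Since $e_{m+1}$ is linear in its first argument, $A=e_{m+1}^{\mathcal{O}}(\pi^{\mu}\Lambda_1,\Lambda_2)$, and $\pi^{\mu}\Lambda_1$ still has trace zero.

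\textbf{Main obstacle.} The delicate point is step 4 above. Lemma~\ref{from M2 to trace zero} needs the level-$\ell$ pair to already be trace zero. It also needs the level-$(\ell+1)$ pair to lift it, so that the trace defect lies in $\pi_{\ell+1}^{\ell}\mathcal{O}_{\ell+1}$ and the scalar correction is killed by Remark~\ref{scalarmatrixderivativemap}. A further check is needed here: the corrected pair $(\widehat{s}_1,\widehat{s}_2)$ differs from $(s_1,s_2)$ only by scalars in $\ker\theta_\ell$. It is therefore still a lift of the same level-$\ell$ pair, and the centralizer conditions behind the surjectivity of the derivative at the next level depend only on the residue pair $(h_1,h_2)$. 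So the induction can continue from the corrected pair without loss.
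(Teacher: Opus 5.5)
Your proposal is correct and follows essentially the same route as the paper's proof. The shared steps are: a trace-zero residue pair with regular semisimple second entry (via Proposition~\ref{lem h2 regular semisimp} or Proposition~\ref{nilpotentengel}); level-by-level lifting through Step III of Proposition~\ref{adelic 2}, with the scalar trace correction of Lemma~\ref{from M2 to trace zero} at each level; assembly via Lemma~\ref{lifting complete level}; and the reduction $A=\pi^{\mu}\mathfrak{X}$ when $\bar A=\mathbf{0}$.

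One minor imprecision, inherited from the paper, sits in your step 3. Every value of $De_{m+1}^{\mathcal{O}_{j+1}}$ is a commutator, so the derivative is surjective only onto the trace-zero part $\mathfrak{sl}_2^{\circ}(\mathcal{O}_{j+1})$, not onto all of $\mathfrak{sl}_2(\mathcal{O}_{j+1})$. That is exactly what is needed to absorb the trace-zero error $A_{j+1}-e_{m+1}^{\mathcal{O}_{j+1}}(B_1,B_2)\in\delta_j\M_2(\mathcal{O}_{j+1})$.
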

\begin{proof}
First, suppose $e_{m+1}^{\mathcal{O}}\colon \mathfrak{sl}_2^{\circ}(\mathcal{O}) \times \mathfrak{sl}_2^{\circ}(\mathcal{O}) \rightarrow \mathfrak{sl}_2^{\circ}(\mathcal{O})$ is surjective. Take any $g\in \mathfrak{sl}_2(k)$. The reduction map $\mathfrak{sl}_2^{\circ}(\mathcal{O}) \rightarrow \mathfrak{sl}_2(k)$ induced by $\theta$ is a surjection. Take a lift $h$ in $\mathfrak{sl}_2^{\circ}(\mathcal{O})$ of $g$ under this surjection. Now from the given assumption $h = e_{m+1}^{\mathcal{O}}(h_1, h_2)$ for some $h_1, h_2\in \mathfrak{sl}_2^{\circ}(\mathcal{O})$. This implies $g=\bar e_{m+1}(\bar h_1, \bar h_2)$ where $\bar h_1, \bar h_2\in \mathfrak{sl}_2(k)$. This shows $\bar e_{m+1}\colon \mathfrak{sl}_2(k) \times \mathfrak{sl}_2(k) \rightarrow \mathfrak{sl}_2(k)$ is surjective.
    
Now, we need to show the converse. Let $\bar e_{m+1} \colon \mathfrak{sl}_2(k) \times \mathfrak{sl}_2(k) \rightarrow \mathfrak{sl}_2(k)$ is surjective. Take $A \in \mathfrak{sl}_2^{\circ}(\mathcal{O})$. Then we have the following cases:

\textbf{Step I:} First, we deal with the case when $\bar A$ is non-zero and lift the solution to $\mathcal O_2$.
\begin{enumerate}
\item Let $\bar A$ be non-trivial nilpotent in $\mathfrak{sl}_2(k)$ and $\bar A= \bar e_{m+1}(h_1, h_2)$ for some $h_1, h_2 \in \mathfrak{sl}_2(k)$. Moreover, by Lemma~\ref{nilpotentengel}, we can choose $h_2$ a regular semisimple element in $\mathfrak{sl}_2(k)$. In this case, by Proposition   \ref{cyclicnilpotentcentral}, $A_2\in \im(e_{m+1}^{\mathcal{O}_2})$ in $\M_2(\mathcal{O}_2)$.
\item Let $\bar A$ be regular semisimple in $\mathfrak{sl}_2(k)$ and $\bar e_{m+1}(h_1, h_2) = \bar A$ for some $h_1, h_2\in \mathfrak{sl}_2(k)$. In this case, Proposition~\ref{lem h2 regular semisimp} ensures that $h_2$ must be regular semisimple. Therefore, by invoking Lemma~\ref{fiber 1}, Lemma~\ref{fiberortho} and Proposition~\ref{adelic 2}, we obtain $A_2\in \im(e_{m+1}^{\mathcal{O}_2})$ in $\M_2(\mathcal{O}_2)$. 
\end{enumerate}
Combining the above two cases we have, when $\bar A$ is a non trivial element then there exist $h_1, h_2\in \mathfrak{sl}_2(k)$ with $h_2$ a regular semisimple element such that $\bar e_{m+1}(h_1, h_2) = \bar A$.  By Proposition~\ref{adelic 2}, we get $A_2 = e_{m+1}^{\mathcal{O}_2}(\tilde h_1, \tilde h_2)$ for some $\tilde h_1, \tilde h_2\in \M_2(\mathcal{O}_2)$ where $\theta(\tilde h_i) = h_i$ for $i=1,2$. Further using Lemma~\ref{from M2 to trace zero} we get, there exist $h_1^{(2)}, h_2^{(2)}\in \mathfrak{sl}_2^{\circ}(\mathcal{O}_2)$ such that $e_{m+1}^{\mathcal{O}_2}(h_1^{(2)}, h_2^{(2)}) = A_2$ where $\theta(h_1^{(2)})=h_1, \theta(h_2^{(2)})=h_2$.

\textbf{Step II:} Now, we deal with the case when $\bar A$ is non-zero and lift the solution to $\mathcal O$. Consider $h_1', h_2'\in \M_2(\mathcal{O}_3)$ such that $\theta_2(h_i')=h_i^{(2)}$ for $i=1,2$. Then there exist $\mathfrak{D}_3 \in \mathfrak{sl}_2(\mathcal{O}_3)$ such that $A_3 = e_{m+1}^{\mathcal{O}_3}(h_1', h_2') + \delta_2 \mathfrak{D}_3$ where $\delta_2 = \pi_3^2$ is the generator of the $ker(\theta_2\colon  \mathcal{O}_3 \rightarrow \mathcal{O}_2)$. Note that $h_2'$ is regular semisimple, as $\theta(h_2') = h_2$, which is regular semisimple. By step III in the proof of Proposition~\ref{adelic 2}, we get $De_{m+1}^{\mathcal{O}_3}$ at $(h_1', h_2')$ is surjective onto $\mathfrak{sl}_2(\mathcal{O}_3)$. Consequently, there exist $\beta_1, \beta_2$ in $\mathfrak{sl}_2(\mathcal{O}_3)$ such that $De_{m+1}^{\mathcal{O}_3}(\beta_1, \beta_2) = \mathfrak{D}_3$. This implies $A_3 = e_{m+1}^{\mathcal{O}_3}(\tilde s_1, \tilde s_2)$ where $\tilde s_i = h_i' + \delta_2\beta_i$ for $i=1,2$. Note that $\tilde s_1, \tilde s_2 \in \M_2(\mathcal{O}_3)$ and $\theta_2(\tilde s_1) = h_1^{(2)}$, $\theta_2(\tilde s_2) = h_2^{(2)}$. Therefore, using Lemma~\ref{from M2 to trace zero} we obtain that there exist $h_1^{(3)}, h_2^{(3)}\in \mathfrak{sl}_2^{\circ}(\mathcal{O}_3)$ such that $A_3 = e_{m+1}^{\mathcal{O}_3}(h_1^{(3)}, h_2^{(3)})$, where $\theta_2(h_i^{(3)}) = h_i^{(2)}$ for $i=1,2$.

Continuing this process repeatedly and using step III in the proof of the Proposition~\ref{adelic 2}, Lemma~\ref{from M2 to trace zero}, and Lemma~\ref{lifting complete level}, we get the following. There exist $\mathbf{h}_1, \mathbf{h_2}\in \M_2(\mathcal{O})$ such that $A = e_{m+1}^{\mathcal{O}}(\mathbf{h}_1, \mathbf{h}_2)$, where $\mathcal{T}^*(\mathbf{h}_i)=(h_i, h_i^{(2)}, h_i^{(3)}, \ldots)$ in $\varprojlim\limits _{j\geq 1} \M_n(\mathcal{O}_j)$ for $i=1,2$. Under the isomorphism $\mathcal{O} \rightarrow \varprojlim\limits _{j\geq 1} \mathcal{O}_j$, the element $\tr(\mathbf{h}_i)$ maps to $(\tr(h_i), \tr(h_i^{(2)}), \tr(h_i^{(3)}), \ldots)$ for $i=1,2$. Since $(\tr(h_i), \tr(h_i^{(2)}), \tr(h_i^{(3)}), \ldots) = (0, 0, \ldots)$ therefore $\tr(\mathbf{h}_i) = 0$ for $i=1, 2$. This gives $\mathbf{h}_1, \mathbf{h}_2\in \mathfrak{sl}_2^{\circ}(\mathcal{O})$.

\textbf{Step III:}
We are left with the case when $\bar A = \mathbf{0}$. Again, in this case, there are two possibilities: (1) $A=\mathbf{0}$ and (2) $A\neq \mathbf{0}$.
\begin{enumerate}
\item When $A=\mathbf{0}$, it is clear that $A=e_{m+1}^{\mathcal{O}}(\mathbf{0}, \mathbf{0}) $ in $\mathfrak{sl}_2^{\circ}(\mathcal{O})$.
\item When $A\neq \mathbf{0}$ there exist a positive interger $\mu$ such that $A=\pi^{\mu} \mathfrak{X}$ where $\theta(\mathfrak{X}) = \mathfrak{\bar X}\neq \mathbf{0}$ and $\mathfrak{X}\in \mathfrak{sl}_2^{\circ}(\mathcal{O})$ (see  the proof of Lemma~\ref{lift of zero}). As $\mathfrak{\bar X}$ is non-trivial, therefore by step I and step II above we obtain that there exist $\mathbf{g}_1, \mathbf{g}_2\in \mathfrak{sl}_2^{\circ}(\mathcal{O})$ such that $\mathfrak{X} = e_{m+1}^{\mathcal{O}}(\mathbf{g}_1, \mathbf{g}_2)$. As $A=\pi^{\mu} \mathfrak{X}$ therefore $A = e_{m+1}^{\mathcal{O}}(\pi^{\mu}\mathbf{g}_1, \mathbf{g}_2)$. Note that $\pi^{\mu}\mathbf{g}_1\in \mathfrak{sl}_2^{\circ}(\mathcal{O})$ as $\tr(\mathbf{g}_1)=0$. 
\end{enumerate}
Therefore, $A = e_{m+1}^{\mathcal{O}}(P_1, P_2)$ for some $P_1, P_2\in \mathfrak{sl}_2^{\circ}(\mathcal{O})$. hence, the map $e_{m+1}^{\mathcal{O}}\colon \mathfrak{sl}_2^{\circ}(\mathcal{O}) \times \mathfrak{sl}_2^{\circ}(\mathcal{O})\rightarrow\mathfrak{sl}_2^{\circ}(\mathcal{O})$ is surjective for $m\geq 1$. This completes the proof.
\end{proof}
\begin{corollary}{\label{largeenough}}
Let $\mathcal{O}$ be a local principal ideal ring, complete with respect to its maximal ideal $\mathfrak{m}=(\pi)$ with residue field $k$ of characteristic $\neq 2$ and $m\geq 1$ be a positive integer. Suppose $k$ is large enough so that $\bar e_{m+1}\colon \mathfrak{sl}_2(k) \times \mathfrak{sl}_2(k) \rightarrow \mathfrak{sl}_2(k)$ is surjective (this is ensured by \cite[corollary 4.4]{bandmanliealgebra}). Then, (using Theorem~\ref{th2} for these $k$) the $(m+1)$-th Engel map $e_{m+1}^{\mathcal{O}}\colon \mathfrak{sl}_2^{\circ}(\mathcal{O}) \times \mathfrak{sl}_2^{\circ}(\mathcal{O}) \rightarrow \mathfrak{sl}_2^{\circ}(\mathcal{O})$ is surjective.
\end{corollary}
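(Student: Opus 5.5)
This corollary is a direct combination of Theorem~\ref{th2} with the field-level surjectivity result of Bandman et al., so the plan is to verify that the hypotheses of Theorem~\ref{th2} are satisfied and then read off the conclusion. The standing assumptions coincide with those of Theorem~\ref{th2}. Here $\mathcal{O}$ is a local principal ideal ring, complete with respect to $\mathfrak{m}=(\pi)$, its residue field $k$ has characteristic $\neq 2$, and $m\geq 1$. The only additional input Theorem~\ref{th2} needs is surjectivity of the residue-field Engel map $\bar e_{m+1}\colon \mathfrak{sl}_2(k)\times\mathfrak{sl}_2(k)\rightarrow \mathfrak{sl}_2(k)$.

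\textbf{Step 1.} I would invoke \cite[Corollary 4.4]{bandmanliealgebra}, which states that for a sufficiently large field $K$ of characteristic $\neq 2$ the Engel map $e_{m+1}^{K}$ on $\mathfrak{sl}_2(K)$ is surjective. Applied to $K=k$, this gives exactly the surjectivity of $\bar e_{m+1}$ on $\mathfrak{sl}_2(k)$. Since the corollary phrases ``large enough'' precisely as this surjectivity, this step amounts to matching the hypothesis.

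\textbf{Step 2.} I would then apply the ``if'' direction of Theorem~\ref{th2}. Surjectivity at the residue-field level yields surjectivity of $e_{m+1}^{\mathcal{O}}\colon \mathfrak{sl}_2^{\circ}(\mathcal{O})\times\mathfrak{sl}_2^{\circ}(\mathcal{O})\rightarrow \mathfrak{sl}_2^{\circ}(\mathcal{O})$. This is the full conclusion, with both arguments lying in $\mathfrak{sl}_2^{\circ}(\mathcal{O})$.

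\textbf{Main obstacle.} There is no real obstacle, only a bookkeeping point. One must check that the notion of ``large enough'' in \cite{bandmanliealgebra} (a lower bound on $|k|$ depending on $m$, with infinite fields automatically included) is compatible with the characteristic restriction here. It is also worth noting that the quoted result produces preimages inside $\mathfrak{sl}_2(k)$ rather than merely in $\M_2(k)$, since Theorem~\ref{th2} requires exactly the $\mathfrak{sl}_2(k)$ version. Even if the quoted result only gave preimages in $\M_2(k)$, Remark~\ref{regular semi criterion lie algebra} would let us replace $g_1$ by its trace-zero part. The second argument $g_2$ can likewise be replaced by $g_2-2^{-1}\tr(g_2)I$, since scalars commute with everything and so do not change $\bar e_{m+1}$. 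Hence the $\mathfrak{sl}_2(k)$ hypothesis of Theorem~\ref{th2} is met in either case.
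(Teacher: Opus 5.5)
Your proposal is correct and is essentially the paper's own argument: the corollary's hypothesis is literally the surjectivity of $\bar e_{m+1}$ on $\mathfrak{sl}_2(k)$, so the ``if'' direction of Theorem~\ref{th2} gives the conclusion at once. Your side remarks (replacing each argument $g_i$ by $g_i-2^{-1}\tr(g_i)I$, and your description of the ``large enough'' condition in the cited work) are harmless but not needed, because the corollary states its hypothesis directly as surjectivity on $\mathfrak{sl}_2(k)$.
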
